\numberwithin{equation}{section}
\title{\bf General Linear-Quadratic Mean Field Stochastic Differential Game with Common Noise: a Direct Method \thanks{This work is supported by National Key R\&D Program of China (2022YFA1006104), National Natural Science Foundations of China (12471419, 12271304), and Shandong Provincial Natural Science Foundations (ZR2024ZD35, ZR2022JQ01).}}
\author{\normalsize  Yu Si\thanks{\it School of Mathematics, Shandong University, Jinan 250100, P.R. China, E-mail: 202112003@mail.sdu.edu.cn} , Jingtao Shi\thanks{\it Corresponding author. School of Mathematics, Shandong University, Jinan 250100, P.R. China, E-mail: shijingtao@sdu.edu.cn}}
\date{}
\newtheorem{myprob}{Problem}[section]
\newtheorem{mypro}{Proposition}[section]
\newtheorem{mythm}{Theorem}[section]
\newtheorem{mylem}{Lemma}[section]
\newtheorem{myassump}{Assumption}[section]
\newtheorem{myremark}{Remark}[section]
\begin{document}

\maketitle

\noindent{\bf Abstract:}\quad This paper investigates a class of general linear-quadratic mean field games with common noise, where the diffusion terms of the system contain the state variables, control variables, and the average state terms. We solve the problem using both the direct method and the fixed-point method in the paper. First, by using the variational method to solve a finite $N$-players game problem, we obtain the necessary and sufficient conditions for the centralized open-loop Nash equilibrium strategy. Subsequently, by employing the decoupling technique, we derive the state feedback representation of the centralized open-loop Nash equilibrium strategy in terms of Riccati equations. Next, by studying the asymptotic solvability of the Riccati equations, we construct the decentralized open-loop asymptotic Nash equilibrium strategies. Finally, through some estimates, we prove the asymptotic optimality of the decentralized open-loop Nash equilibrium strategies. Moreover, we find that the decentralized open-loop asymptotic Nash equilibrium strategies obtained via the fixed-point method are identical to those derived using the direct method. Finally, we apply the main results of this paper to a concrete production planning example.

\vspace{2mm}

\noindent{\bf Keywords:}\quad Mean field games, direct method, linear-quadratic control, common noise, $\epsilon$-Nash equilibrium, system of coupled Riccati equations.

\vspace{2mm}

\noindent{\bf Mathematics Subject Classification:}\quad 93E20, 60H10, 49K45, 49N70, 91A23

\section{Introduction}

In recent years, {\it mean field games} (MFGs) have garnered significant attention in mathematics and control engineering, with broad applications spanning engineering, finance, and social sciences. A defining feature of MFGs is that in large-population systems, the influence of any individual participant becomes negligible, while the collective behavior of the population remains non-negligible. Due to the vast number of participants, their mutual perception and interactions exhibit high complexity. Consequently, centralized strategies relying on complete information about all participants are typically infeasible for any given agent. However, MFGs offer a solution framework: through mean field approximation, decentralized strategies can be designed using only local state information. Although such strategies do not guarantee precise optimality, they yield asymptotic-optimal performance when the population size is sufficiently large.

The past research mainly has two approaches. One approach starts by formally solving an $N$-agents game to obtain a large coupled solution equation system. The subsequent step involves designing decentralized optimal strategy by taking $N \rightarrow \infty$ (Lasry and Lions \cite{Lasry-Lions-2007}), which can be called the direct (bottom-up) approach. The interested readers can also refer to \cite{Cong-Shi-2024,  Huang-Zhou-2020, Si-Shi-2025-OCAM, Wang-2024, Wang-Zhang-Zhang-2022}. Another route is to solve an optimal control problem of a single agent by replacing the state average term with a limiting process and formalize a fixed point problem to determine the limiting process, and this is called the fixed point (top-down) approach. This kind of method is also called {\it Nash certainty equivalence} (NCE) (Huang et al. \cite{Huang-Caines-Malhame-2007, Huang-Malhame-Caines-2006}).
The interested readers refer to \cite{Bensoussan-Feng-Huang-2021, Hu-Huang-Li-2018,Huang-Wang-2016, Li-Nie-Wu-2023}. Further analysis of MFGs and related topics can refer to \cite{Bensoussan-Frehse-Yam-2013, Carmona-Delarue-2013, Huang-2010,  Moon-Basar-2018}.

Mean field games with common noise represent a significant category of game problems where participants are interconnected and share a common stochastic noise (Carmona et al. \cite{Carmona-Delarue-Lacker-2016}). Since common noise typically represents the influence of external factors that affect all participants, there are many real-life examples, especially in finance and economics. For instance, multi-agent systems may be subject to disturbances from the external environment, or all investors in a financial market may be affected by government policies (\cite{Carmona-Fouque-Sun-2015}, \cite{Souganidis-Zariphopoulou-2024}). Lacker and Flem \cite{Lacker-Flem-2023} provides a comprehensive closed-loop convergence framework for mean field games with common noise by introducing the concept of weak mean field equilibrium. Huang et al. \cite{Huang-Wang-Wang-Wang-2023} employed a novel decoupling technique to investigate the control problem of {\it linear-quadratic} (LQ) stochastic backward large population system with partial information and common noise. Hua and Luo \cite{Hua-Luo-2024} explores a class of LQ extended mean field games with common noises, where the mean field interactions occur solely through the (conditional) expectations of states and controls, and allows for nonlinear variations in the state coefficients and cost functionals. Si and Shi \cite{Si-Shi-2025} employed a fixed-point approach to analyze a class of general linear-quadratic mean-field Stackelberg games with common noise.

Recently, Li et al. \cite{Li-Nie-Wang-Yan-2024} uses the fixed-point method to study a class of general LQ mean field games with partial information and common noise, while Wang et al. \cite{Wang-Zhang-Zhang-2022} employs the direct method to analyze a class of general LQ mean field social optimum problems. In this paper, we apply the direct method to consider a class of general LQ mean field games with common noise. Compared with the existing literatures, the contributions of this paper are listed as follows.
\begin{itemize}
  \item We introduced a more general LQ mean field games with common noise. The weight matrix of the control in the cost functional is indefinite. And the common noise can be observed by all agents, and the diffusion terms of the system contain the state variables, control variables, and the average state terms. Due to the generality of the diffusion term, there are some difficulties in solving the optimal open-loop feedback representation strategies for the $N$-agents game. Therefore, we present a new decoupling form (\ref{diffusion coefficient of p_j}), which is different from that in Liang et al. \cite{Liang-Wang-Zhang-2025}.
  \item  We use the direct method to solve the problem. First, we use the variational method to solve a finite $N$-agents game problem. Then, by studying the asymptotic solvability of Riccati and Lyapunov equations, we derived a feedback representation of the asymptotic open-loop Nash equilibrium strategy. We also presented two conditions to ensure the well-posedness of Riccati equation (\ref{K equation}).
  \item By comparing with the fixed-point approach, we find that the decentralized open-loop asymptotic Nash equilibrium strategies obtained via the fixed-point method are identical to those derived using the direct method.
\end{itemize}

The rest of this paper is organized as follows. In Section 2, we formulate our problem. In Section 3, we first study the $N$-agents game problem, present the optimal centralized open-loop Nash equilibrium strategy, then construct the asymptotically optimal decentralized open-loop Nash equilibrium strategy, and compare it with results from the fixed-point method. Then, In Section 4, we apply the main results of this paper to a concrete production planning example. Finally, some concluding remarks are given in Section 5.

\section{Problem formulation}

Firstly, we introduce some notations that will be used throughout the paper. We consider a finite time interval $[0, T]$ for a fixed $T > 0$.
Let $\big(\Omega, \mathcal{F}, \left\{\mathcal{F}_t\right\}_{t\geq0},\mathbb{P}\big)$ be a complete filtered probability space, on which
$\left\{W_0(s), W_i(s),0 \leq s \leq t, 1 \leq i \leq N\right\}$ is a standard $(N+1)$-dimensional Brownian motion. 
Let $\mathbb{R}^n$ be an $n$-dimensional Euclidean space with norm and inner product being defined as $|\cdot|$ and $\langle\cdot, \cdot\rangle$, respectively. 
Next, we introduce three necessary spaces frequently used in this paper. A bounded, measurable function $f(\cdot):[0, T] \rightarrow \mathbb{R}^n$ is denoted as $f(\cdot) \in L^{\infty}(0, T; \mathbb{R}^n)$. $C\left([0, T], \mathbb{R}^n\right)$ is the space of all $\mathbb{R}^n$-valued continuous functions defined on $[0, T ]$.
An $\mathbb{R}^n$-valued, $\mathbb{F}$-adapted stochastic process $f(\cdot): \Omega \times [0, T] \rightarrow \mathbb{R}^n$ satisfying $\mathbb{E} \int_0^T |f(t)|^2 dt < \infty$ is denoted as $f(\cdot) \in L_{\mathbb{F}}^2(0, T; \mathbb{R}^n)$. Similarly, an $\mathbb{R}^n$-valued, $\mathcal{F}_{T}$-measurable random variable $\xi$ with $\mathbb{E} [|\xi|^2] < \infty$ is denoted as $\xi \in L_{\mathcal{F}_T}^2(\Omega, \mathbb{R}^n)$.
For any random variable or stochastic process $X$ and filtration $\mathcal{H}$, $\mathbb{E}X$ and $\mathbb{E}[X|\mathcal{H}]$ represent the mathematical expectation and conditional mathematical expectation of $X$, respectively. For a given vector or matrix \(M\), let \(M^\top\) represent its transpose. We denote the set of symmetric \(n \times n\) matrices (resp. positive semi-definite matrices) with real elements by \(\mathcal{S}^n\) (resp. \(\mathcal{S}_+^n\)), and $\|\cdot\|_Q^2 :=\langle Q \cdot, \cdot\rangle$ for any $Q \in \mathcal{S}^n$. If \(M \in \mathcal{S}^n\) is positive (semi) definite, we abbreviate it as \(M > (\geq) 0\). For a positive constant \(k\), if \(M \in \mathcal{S}^n\) and \(M > kI\), we label it as \(M \gg 0\).

Now, let us focus on a large population system comprised of $N$ individual agents, denoted as $\left\{\mathcal{A}_i\right\}_{1 \leq i \leq N}$. The state process $x_i(\cdot)\in\mathbb{R}^n$ of the agent $\mathcal{A}_i$ is given by the following linear {\it stochastic differential equation} (SDE):
\begin{equation}\label{state equation}
\left\{\begin{aligned}
	d x_i(t)=&\left(A(t) x_i(t)+B(t) u_i(t)+E(t) x^{(N)}(t)+f(t)\right) d t\\
	&+  \left(C(t) x_i(t)+D(t) u_i(t)+F(t) x^{(N)}(t)+g(t)\right) d W_i \\
	&+\left(\widetilde{C}(t) x_i(t)+\widetilde{D}(t) u_i(t)
	+\widetilde{F}(t)x^{(N)}(t)+\widetilde{g}(t)\right) d W_0, \\
	x_i(0)=&\ \xi_i,
\end{aligned}\right.
\end{equation}
where $\xi_i \in \mathbb{R}^n$ represent the initial states, $u_i(\cdot)\in\mathbb{R}^k$ denote the control process of the agent $\mathcal{A}_i$ and $x^{(N)}(\cdot) := \frac{1}{N} \sum_{i=1}^N x_i(\cdot)$ signifies the average state of the agents. In addition, $W_0$ represent the common noise of system and the information generated by the common noise is defined by
$\mathcal{F}_t^{0}:=\sigma\left\{W_0(s), 0 \leq s \leq t\right\}$. The coefficients $A(\cdot)$, $B(\cdot)$, $E(\cdot)$, $C(\cdot)$, $D(\cdot)$, $F(\cdot)$, $\widetilde{C}(\cdot)$, $\widetilde{D}(\cdot)$, $\widetilde{F}(\cdot)$ are deterministic matrix-valued functions with compatible dimensions and $f(\cdot)$, $g(\cdot)$, $\widetilde{g}(\cdot)$ are $\mathcal{F}_t^{0}$-adapted processes in $\mathbb{R}^n$.

For the agent $\mathcal{A}_i$, the cost functional is defined by
\begin{equation}\label{cost functional}
	\begin{aligned}
        \mathcal{J}_i\left(u_i(\cdot), u_{-i}(\cdot)\right)=&\ \frac{1}{2}\mathbb{E}\left[ \int_0^T\left(\left\|x_i(t)-\Gamma_1(t) x^{(N)}(t)-\eta_1(t)\right\|_Q^2+\left\|u_i(t)-\eta_2(t)\right\|_R^2\right) d t\right.\\
        &\qquad \left.+\left\|x_i(T)-\Gamma_0 x^{(N)}(T)-\eta_{0}\right\|_{G}^2\right],
    \end{aligned}
\end{equation}
where $Q(\cdot)$, $R(\cdot)$ and $\Gamma_1(\cdot)$ are bounded deterministic matrix-valued functions with compatible dimensions. And we define the admissible centralized strategy of the agent $\mathcal{A}_i$ by
$$
\mathcal{U}_{i}^c=\left\{u_i(\cdot) \mid u_i(\cdot) \in L_{\mathcal{F}_t}^2\big(0, T ; \mathbb{R}^k\big)\right\},
$$
where $\mathcal{F}_t:=\sigma\left\{\xi_i, W_0(s), W_i(s),0 \leq s \leq t, 1 \leq i \leq N\right\}$.
Moreover, we introduce the following assumptions of coefficients.

\begin{myassump}\label{A1}
	The coefficients satisfy the following conditions:
	
	(i) $A(\cdot),C(\cdot), \widetilde{C}(\cdot),E(\cdot), F(\cdot), \widetilde{F}(\cdot) \in L^{\infty}\left(0, T ; \mathbb{R}^{n \times n}\right)$, $f(\cdot),g(\cdot),\widetilde{g}(\cdot)\in L^{2}_{\mathcal{F}^0_t}\left(0, T ; \mathbb{R}^{n}\right)$ and $B(\cdot), D(\cdot), \widetilde{D} \in L^\infty\left(0, T; \mathbb{R}^{n \times k}\right) $;
	
	(ii) $Q(\cdot) \in L^{\infty}\left(0, T ; \mathcal{S}^n\right)$, $R(\cdot) \in L^{\infty}\left(0, T ; \mathcal{S}^k\right)$, $\Gamma_1(\cdot) \in L^{\infty}\left(0, T ; \mathbb{R}^{n\times n}\right)$, $\eta_1(\cdot)\in L^{2}_{\mathcal{F}^0_t}\left(0, T ; \mathbb{R}^n\right)$, $\eta_2(\cdot)\in L^{2}_{\mathcal{F}^0_t}\left(0, T ; \mathbb{R}^k\right)$, $G \in \mathcal{S}^n$,  $\Gamma_0 \in \mathbb{R}^{n\times n}$, $\eta_0 \in L^{2}_{\mathcal{F}^0_T}\left(\Omega, \mathbb{R}^n\right)$ and $Q(\cdot) \geq 0$, $G \geq 0$.
	
	(iii) $\xi_i$, $i=1,\cdots,N$ are mutually independent and have
	the same distribution. And the noises $\sigma\{W_0(s), W_i(s), 0\leq t \leq T,  i=1,\cdots, N\}$ and the initial states $\sigma\{\xi_i, i=1,\cdots,N\}$ are independent of each other.
\end{myassump}

We mention that under Assumption \ref{A1}, the system (\ref{state equation}) admits a unique solution. And then, the cost functionals (\ref{cost functional}) are well-defined. Then, we need to solve this centralized problem.

\begin{myprob}\label{problem centralized}
	Finding centralized open-loop Nash equilibrium $\left(u_1^*(\cdot), \cdots, u_N^*(\cdot)\right)$, $u_i^*(\cdot)\in\mathcal{U}^c_i, i=1,\cdots, N$ for the system (\ref{state equation})-(\ref{cost functional}), i.e., for $1 \leq i \leq N$, we have
	\begin{equation*}
		\mathcal{J}_i\left(u^*_i(\cdot), u^*_{-i}(\cdot)\right)=\inf _{u_i(\cdot) \in\, \mathcal{U}_i^{c}} \mathcal{J}_i\left(u_i(\cdot), u^*_{-i}(\cdot)\right).
	\end{equation*}
\end{myprob}

Since it is difficult to obtain complete information in real life, it is challenging to derive an accurate centralized strategy. Therefore, mean field game theory focuses on designing decentralized strategies that rely solely on agents' local information, which ensuring these strategies achieve asymptotically optimal performance. We first define the admissible decentralized strategy of the agent $\mathcal{A}_i$ by
$$
\mathcal{U}_{d}^i=\left\{u_i(\cdot) \mid u_i(\cdot) \in L_{\mathcal{F}^i_t}^2\big(0, T ; \mathbb{R}^k\big)\right\},
$$
where $\mathcal{F}_t^i:=\sigma\left\{\xi_i, W_i(s),W_0(s), 0 \leq s \leq t\right\} , 1 \leq i \leq N$.
As the second step of the direct approach, we will design the decentralized asymptotic Nash equilibrium based on the the solutions of Problem \ref{problem centralized}. The problem is stated below.

\begin{myprob}\label{problem decentralized}
	Finding decentralized open-loop $\varepsilon$-Nash equilibrium $\left(u_1^*(\cdot), \cdots, u_N^*(\cdot)\right)$, $u_i^*(\cdot)\in\mathcal{U}^d_i, i=1,\cdots, N$ for the system (\ref{state equation})-(\ref{cost functional}), i.e., for $1 \leq i \leq N$, we have
	\begin{equation*}
		\mathcal{J}_i\left(u^*_i(\cdot), u^*_{-i}(\cdot\right)\leq\inf _{u_i(\cdot) \in\, \mathcal{U}_i^{c}} \mathcal{J}_i\left(u_i(\cdot), u^*_{-i}(\cdot)\right)+\varepsilon,
	\end{equation*}
	where $\varepsilon=O(\frac{1}{\sqrt{N}})$.
\end{myprob}

\section{Main Result}

\subsection{The $N$-player Nash game and open-loop centralized Nash equilibrium}

Note that the control weight $R(\cdot)$ in the cost functional is indefinite. To ensure the well-posedness of Problem 2.1, convexity properties need to be given. Based on Proposition 2.2.3 in Sun and Yong \cite{Sun-Yong-2020}, we first present equivalent conditions for the convexity of cost functional $\mathcal{J}_i\left(u_i(\cdot), u_{-i}(\cdot\right)$.
\begin{mylem}
    In Problem \ref{problem centralized}, for $i=1, \cdots, N$, $\mathcal{J}_i\left(u_i(\cdot), u_{-i}(\cdot\right)$ is convex in $u_i$ if and only if for any $u_i(\cdot)\in \mathcal{U}^c_i$,
    $$
    \begin{aligned}
        &\mathbb{E}\left[ \int_0^\top  \left(\left\langle Q\left(y_i-\Gamma_1 y^{(N)}\right), y_i-\Gamma_i y^{(N)}\right\rangle+\left\langle R u_i, u_i\right\rangle\right) d t\right. \\
        &\qquad +\left\langle G\left(y_i(T)-\Gamma_0 y^{(N)}(T)\right), y_i(T)-\Gamma_0 y^{(N)}(T)\right\rangle\bigg] \geq 0,
    \end{aligned}
    $$
    where $y_i(\cdot)$ satisfies the following equation
    $$
    \left\{\begin{aligned}
    	dy_i=&\left(Ay_i+B u_i+Ey^{(N)}\right) d t+  \left(Cy_i+D u_i+F y^{(N)}\right) d W_i \\
    	&+\left(\widetilde{C}y_i+\widetilde{D} u_i+\widetilde{F} y^{(N)}\right) d W_0, \\
    	y_i(0)=&\ 0,
    \end{aligned}\right.
    $$
    $$
    \left\{\begin{aligned}
    	dy_j=&\left(Ay_j+Ey^{(N)}\right) d t+  \left(Cy_j+F y^{(N)}\right) d W_j+\left(\widetilde{C}y_j+\widetilde{F} y^{(N)}\right) d W_0, \\
    	y_j(0)=&\ 0,
    \end{aligned}\right.
    $$
    with $y^{(N)}:=\frac{1}{N}\sum_{k=1}^{N}y^i$.
\end{mylem}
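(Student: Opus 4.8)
The plan is to recognize $\mathcal{J}_i(\,\cdot\,,u_{-i}(\cdot))$ as an honest quadratic functional of the single variable $u_i(\cdot)$ and then apply the elementary convexity criterion for quadratic functionals (this is the content of Proposition 2.2.3 in Sun and Yong \cite{Sun-Yong-2020} that the text invokes): such a functional is convex on the convex set $\mathcal{U}_i^c$ if and only if its purely second-order part is nonnegative. The whole proof is therefore a bookkeeping computation of that second-order part together with an argument that it is independent of the base control.

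\emph{First step: the variational system.} Fix $i$ and the strategies $u_{-i}(\cdot)$ of the other agents. For $u_i(\cdot),v_i(\cdot)\in\mathcal{U}_i^c$ and $\theta\in\mathbb{R}$, let $(x_1^\theta(\cdot),\dots,x_N^\theta(\cdot))$ solve (\ref{state equation}) when $\mathcal{A}_i$ uses $u_i(\cdot)+\theta v_i(\cdot)$ and each $\mathcal{A}_j$, $j\neq i$, keeps $u_j(\cdot)$. Since (\ref{state equation}) is affine in the controls, and because perturbing $u_i$ propagates through the mean-field terms $E x^{(N)}$, $F x^{(N)}$, $\widetilde F x^{(N)}$ to \emph{every} agent, the difference quotient $y_k:=\theta^{-1}\big(x_k^\theta(\cdot)-x_k(\cdot)\big)$ is well-defined (Assumption \ref{A1} gives existence and uniqueness for all the relevant linear SDEs), independent of $\theta$, and solves precisely the coupled system in the statement: for $k=i$ the first $y$-equation with $v_i$ playing the role of the displayed control, for $k\neq i$ the second $y$-equation, and $y^{(N)}=\frac1N\sum_{k=1}^{N}y_k$. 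The crucial point is that this variational system depends only on $v_i$, not on the base control $u_i$.

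\emph{Second step: quadratic expansion of the cost.} Write $z_i:=x_i-\Gamma_1 x^{(N)}-\eta_1$ and $z_i(T):=x_i(T)-\Gamma_0 x^{(N)}(T)-\eta_0$. From $x_i^\theta=x_i+\theta y_i$ and $(x^{(N)})^\theta=x^{(N)}+\theta y^{(N)}$ we get $z_i^\theta=z_i+\theta\big(y_i-\Gamma_1 y^{(N)}\big)$, with the analogous identity at $t=T$, while the control shifts by $\theta v_i$. Expanding the three quadratic forms $\|\cdot\|_Q^2$, $\|\cdot\|_R^2$, $\|\cdot\|_G^2$ yields
$$
\mathcal{J}_i\big(u_i(\cdot)+\theta v_i(\cdot),u_{-i}(\cdot)\big)=\mathcal{J}_i\big(u_i(\cdot),u_{-i}(\cdot)\big)+\theta\,\mathcal{L}_i(v_i)+\tfrac{\theta^2}{2}\,\mathcal{Q}_i(v_i),
$$
where $\mathcal{L}_i$ is linear in $v_i$ and $\mathcal{Q}_i(v_i)$ is exactly the functional displayed in the lemma (the cost read off along the $y$-system with the inhomogeneous data $f,g,\widetilde g,\eta_1,\eta_0$ removed).

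\emph{Third step: conclusion.} A scalar quadratic $\theta\mapsto a+b\theta+c\theta^2$ is convex on $\mathbb{R}$ iff $c\geq0$. Applying this along every line $\theta\mapsto u_i(\cdot)+\theta v_i(\cdot)$ and using that the leading coefficient $\tfrac12\mathcal{Q}_i(v_i)$ does not depend on $u_i$, we conclude that $\mathcal{J}_i(\,\cdot\,,u_{-i}(\cdot))$ is convex on $\mathcal{U}_i^c$ if and only if $\mathcal{Q}_i(v_i)\geq0$ for all $v_i(\cdot)\in\mathcal{U}_i^c$, which is the asserted inequality (after renaming $v_i$ back to $u_i$). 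The only place requiring care is the first step — one must carry the full $N$-dimensional variational system because a perturbation of $u_i$ moves $x^{(N)}$ and hence all $x_j$ — and the fact that the weight $R(\cdot)$ is merely symmetric, not positive, is exactly what makes nonnegativity of $\mathcal{Q}_i$ a genuine hypothesis rather than an automatic consequence.
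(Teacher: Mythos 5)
Your proposal is correct and follows essentially the same route as the paper: the paper justifies the lemma by invoking Proposition 2.2.3 of Sun and Yong \cite{Sun-Yong-2020} (the convexity criterion for quadratic functionals), and the identical variational system and second-order expansion you write out appear explicitly in the paper's proof of Theorem \ref{theorem open loop centralized}. You have simply filled in the details that the paper delegates to the citation, including the key observation that the perturbation of $u_i$ propagates through $x^{(N)}$ to all agents and that the quadratic part is independent of the base control.
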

By using the variational analysis, we obtain the necessary and sufficient condition of the existence of centralized Nash equilibria for Problem \ref{problem centralized}.

\begin{mythm}\label{theorem open loop centralized}
	Let Assumption \ref{A1} hold, Problem \ref{problem centralized} has a centralized Nash equilibrium $(u^*_1(\cdot),\\\cdots, u^*_N(\cdot))$ if and only if for $i=1, \cdots, N$, $\mathcal{J}_i\left(u_i(\cdot), u_{-i}(\cdot)\right)$ is convex in $u_i$ and the following {\it forward-backward SDEs} (FBSDEs)
	\begin{equation}\label{Hamiltonian system centralized}
		\left\{\begin{aligned}
			d x^*_i = &\left(A x^*_i+B u^*_i+E x^{*(N)}+f\right) d t+  \left(C x^*_i+D u^*_i+F x^{*(N)}+g\right) d W_i \\
			&+\left(\widetilde{C}x^*_i+\widetilde{D} u^*_i +\widetilde{F}x^{*(N)}+\widetilde{g}\right) d W_0, \\
			d p_i^i = &-\bigg( A^\top p_i^i + C^\top q_{ii}^i + \widetilde{C}^\top q_{i0}^i + E^\top p^{i(N)} + F^\top q^{i(N)} + \widetilde{F}^\top q_0^{i(N)} \\
			& +\left(I_n-\frac{\Gamma_1}{N}\right)^\top Q \left( x_i^* - \Gamma_1 x^{*(N)} - \eta_1 \right) \bigg) dt + \sum_{k=1}^N q_{ik}^i \, dW_k + q_{i0}^i \, dW_0,\\
			d p_j^i = &-\bigg(A^\top p_j^i+C^\top q_{j j}^i+\widetilde{C}^\top q_{j 0}^i+E^\top p^{i{(N)}}+F^\top q^{i{(N)}}+\widetilde{F}^\top q_0^{i(N)} \\
			& -\frac{\Gamma_1^\top}{N}Q \left( x_i^* - \Gamma_1 x^{*(N)} - \eta_1 \right) \bigg) dt + \sum_{k=1}^N q_{jk}^i \, dW_k + q_{j0}^i \, dW_0, \\
			\bar{x}_i^*(0)&=\xi_i,\\
			p_i^i(T)&=\left(I_n-\frac{\Gamma_0}{N}\right)^\top G\left(x^*_i(T)-\Gamma_0 x^{*(N)}(T)-\eta_0\right),\\
			p_j^i(T)&=-\Gamma_0^\top \frac{G}{N}\left(x^*_i(T)-\Gamma_0 x^{*(N)}(T)-\eta_0\right),
		\end{aligned}\right.
	\end{equation}
	admit adapted solution $(x_i^*(\cdot), p_j^i(\cdot), q_{jk}^i(\cdot), q_{j0}^i(\cdot))\in L^2_{\mathcal{F}_t}(0,T;\mathbb{R}^n)\times L_{\mathcal{F}_t}^2\left(0, T ; \mathbb{R}^n\right) \times L_{\mathcal{F}_t}^2\left(0, T ; \mathbb{R}^{n}\right)\times L_{\mathcal{F}_t}^2\left(0, T ; \mathbb{R}^{n}\right)$, where $p^{i(N)}:=\frac{1}{N}\sum_{k=1}^{N}p^i_k$, $q^{i(N)}:=\frac{1}{N}\sum_{k=1}^{N}q^i_{kk}$, $q_0^{i(N)}:=\frac{1}{N}\sum_{k=1}^{N}q^i_{k0}$, satisfying the stationary condition:
	\begin{equation}\label{optimal open loop condition of centralized}
		B^\top p_i^i+D^\top q_{i i}^i+\widetilde{D}^\top q_{i 0}^i+R u_i^*-R \eta_2=0.
	\end{equation}
\end{mythm}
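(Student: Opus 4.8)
The plan is to apply the standard variational (convex duality) method for LQ Nash games, adapted to the mean-field coupling through $x^{(N)}$. Fix an agent index $i$ and freeze the strategies $u^*_{-i}(\cdot)$ of the other players. Then $u_i \mapsto \mathcal{J}_i(u_i(\cdot), u^*_{-i}(\cdot))$ is a quadratic functional on the Hilbert space $\mathcal{U}^c_i = L^2_{\mathcal{F}_t}(0,T;\mathbb{R}^k)$; a critical point is a minimizer if and only if the functional is convex, which is precisely the content of Lemma 3.1. So it suffices to characterize the critical point via a first-order condition. The subtlety specific to this model is that perturbing $u_i$ by $\theta v_i$ affects not only $x_i$ but, through $x^{(N)} = \frac1N\sum_k x_k$, \emph{all} the other states $x_j$, $j\neq i$; this is the source of the $\frac1N$-weighted terms and the need to introduce a whole family of adjoint processes $p^i_j$, $j=1,\dots,N$ (one for each state equation), rather than a single adjoint.

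First I would compute the Gâteaux derivative. Let $x^\theta_k$ denote the state of $\mathcal{A}_k$ under $(u^*_i + \theta v_i, u^*_{-i})$ and set $y_k := \frac{d}{d\theta}\big|_{\theta=0} x^\theta_k$. Then $y_i$ solves the linear SDE driven by $v_i$ (with $Bv_i$, $Dv_i$, $\widetilde Dv_i$ as inhomogeneous terms and the $y^{(N)}$ coupling), while for $j\neq i$, $y_j$ solves the homogeneous linear SDE with only the $y^{(N)}$ coupling — exactly the two systems appearing in Lemma 3.1. Differentiating $\mathcal{J}_i$ gives
\[
\frac{d}{d\theta}\Big|_{\theta=0}\mathcal{J}_i
= \mathbb{E}\bigg[\int_0^T \Big\langle \big(I_n-\tfrac{\Gamma_1}{N}\big)^\top Q\big(x^*_i-\Gamma_1 x^{*(N)}-\eta_1\big) - \tfrac{1}{N}\Gamma_1^\top Q(\cdots), \ \text{(suitable combinations of } y_k)\Big\rangle + \langle R u^*_i, v_i\rangle\, dt + (\text{terminal})\bigg].
\]
Next I would introduce, for each $j$, the adjoint BSDE for $p^i_j$ with the driver and terminal condition as written in \eqref{Hamiltonian system centralized}: the coefficients $A^\top, C^\top, \widetilde C^\top$ and the mean-field return terms $E^\top p^{i(N)}, F^\top q^{i(N)}, \widetilde F^\top q^{i(N)}_0$ are dictated by the transpose of the $y$-dynamics, and the inhomogeneous terms $\big(I_n-\frac{\Gamma_1}{N}\big)^\top Q(\cdots)$ for $j=i$ versus $-\frac1N\Gamma_1^\top Q(\cdots)$ for $j\neq i$ are dictated by matching the two kinds of running-cost sensitivity above (likewise at the terminal time with $G$, $\Gamma_0$). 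Then I would apply Itô's formula to $\sum_j \langle p^i_j(t), y_j(t)\rangle$ on $[0,T]$, take expectations so all $dW_k$ and $dW_0$ martingale terms vanish, and watch the mean-field cross terms telescope: the point is that $\sum_j \langle E^\top p^{i(N)}, y_j\rangle = \langle p^{i(N)}, E\sum_j y_j\rangle = N\langle p^{i(N)}, E y^{(N)}\rangle$, which precisely cancels against the $E y^{(N)}$ contributions distributed across the $N$ state equations. After this bookkeeping, the duality identity collapses to
\[
\frac{d}{d\theta}\Big|_{\theta=0}\mathcal{J}_i = \mathbb{E}\int_0^T \big\langle B^\top p^i_i + D^\top q^i_{ii} + \widetilde D^\top q^i_{i0} + R u^*_i,\ v_i\big\rangle\, dt.
\]
Since $v_i\in\mathcal{U}^c_i$ is arbitrary and $\mathcal{F}_t$-adapted, vanishing of this derivative is equivalent to the stationarity condition \eqref{optimal open loop condition of centralized} holding $dt\otimes d\mathbb{P}$-a.e. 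Combining: $u^*_i$ is a best response to $u^*_{-i}$ iff $\mathcal{J}_i$ is convex in $u_i$ and the coupled FBSDE \eqref{Hamiltonian system centralized} with \eqref{optimal open loop condition of centralized} is solvable; doing this for every $i$ gives the Nash equilibrium characterization.

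The main obstacle I anticipate is the combinatorial/algebraic bookkeeping in the Itô expansion of $\sum_{j=1}^N \langle p^i_j, y_j\rangle$: one must verify that the $\frac1N$-weighted driver terms, the mean-field feedback terms $E^\top p^{i(N)}$, $F^\top q^{i(N)}$, $\widetilde F^\top q^{i(N)}_0$, and the cross-variation contributions from the three noises $W_j$ and $W_0$ all recombine \emph{exactly} into the single running term $\langle B^\top p^i_i + D^\top q^i_{ii} + \widetilde D^\top q^i_{i0} + R u^*_i, v_i\rangle$ with no residue — in particular the asymmetry between $j=i$ and $j\neq i$ (only $\mathcal{A}_i$'s equation carries $Bv_i, Dv_i, \widetilde Dv_i$, but all equations feel the perturbation through $y^{(N)}$) must be tracked carefully. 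A secondary technical point is justifying the differentiation under the expectation and the integrability of all adjoint processes, which follows from Assumption~\ref{A1} and standard linear SDE/BSDE estimates. The sufficiency direction additionally uses that, under convexity, a critical point of a quadratic functional is a global minimizer, which is where Lemma~3.1 is invoked.
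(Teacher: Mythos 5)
Your proposal is correct and follows essentially the same route as the paper: a first-order variational argument (the paper writes the perturbation expansion as $\frac{\varepsilon}{2}X_1+\varepsilon X_2$, with $X_1\ge 0$ giving the convexity condition of Lemma 3.1 and $X_2=0$ giving stationarity), combined with Itô's formula applied to $\langle \delta x_i,p^i_i\rangle+\sum_{j\neq i}\langle \delta x_j,p^i_j\rangle$ to reduce the first variation to $\mathbb{E}\int_0^T\langle B^\top p^i_i+D^\top q^i_{ii}+\widetilde D^\top q^i_{i0}+Ru^*_i,v_i\rangle\,dt$. Your Gâteaux-derivative formulation and your identification of the family of adjoint processes, the $\tfrac1N$-weighted drivers, and the mean-field cancellation match the paper's computation.
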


\begin{proof}
	Suppose $(u^*_1(\cdot),\cdots, u^*_N(\cdot))$ is a centralized Nash equilibrium of Problem \ref{problem centralized} and $(x^*_1(\cdot),\cdots, \\x^*_N(\cdot))$ is the corresponding optimal trajectory. Given $i$, for any $u_i(\cdot) \in \mathcal{U}^c_i$ and $\forall$ $\varepsilon>0$, we denote
	$$
	u_i(\cdot)^\varepsilon:=u^*_i(\cdot)+\varepsilon v_i(\cdot) \in \mathcal{U}^c_i,
	$$
	where $v_i(\cdot)=u_i(\cdot)-u^*_i(\cdot)$.
	
	Let $(\bar{x}^\varepsilon_i(\cdot), \bar{x}^\varepsilon_j(\cdot))$, $j\neq i$ be the solution to the following perturbed state equations
	$$
	\left\{\begin{aligned}
		d x_i^{\varepsilon}=&\left(A x_i^{\varepsilon}+B u_i^{\varepsilon}+E x^{{\varepsilon}{(N)}}+f\right) d t+  \left(C x_i^{\varepsilon}+D u_i^{\varepsilon}+F x^{{\varepsilon}{(N)}}+g\right) d W_i
		\\&+\left(\widetilde{C} x_i^{\varepsilon}+\widetilde{D} u_i^{\varepsilon}+\widetilde{F} x^{{\varepsilon}{(N)}}+\widetilde{g}\right) d W_0 ,\\
		x_i^{\varepsilon}(0)=&\ \xi_i,
	\end{aligned}\right.
	$$
	$$
	\left\{\begin{aligned}
		d x_j^{\varepsilon}=&\left(A x_j^{\varepsilon}+B u_j^*+E x^{{\varepsilon}{(N)}}+f\right) d t+  \left(C x_j^{\varepsilon}+D u_j^*+F x^{{\varepsilon}{(N)}}+g\right) d W_i
		\\&+\left(\widetilde{C} x_j^{\varepsilon}+\widetilde{D} u_j^*+\widetilde{F} x^{{\varepsilon}{(N)}}+\widetilde{g}\right) d W_0, \\
		x_j^{\varepsilon}(0)=&\ \xi_j.
	\end{aligned}\right.
	$$
	Let $\delta x_i(\cdot):=\frac{x_i^{\varepsilon}(\cdot)-x_i^*(\cdot)}{\varepsilon},\delta x_j(\cdot):=\frac{x_j^{\varepsilon}(\cdot)-x_j^*(\cdot)}{\varepsilon}$, $j\neq i$. It can be verified that $\delta \bar{x}_i(\cdot)$ satisfies
	$$
	\left\{\begin{aligned}
		d\delta x_i=&\left(A\delta x_i+B v_i+E\delta x^{(N)}\right) d t+  \left(C\delta x_i+D v_i+F \delta x^{(N)}\right) d W_i \\
		&+\left(\widetilde{C}\delta x_i+\widetilde{D} v_i+\widetilde{F} \delta x^{(N)}\right) d W_0, \\
		\delta x_i(0)=&\ 0,
	\end{aligned}\right.
	$$
	and $\delta x_j(\cdot)$, $j\neq i$ satisfies
	$$
	\left\{\begin{aligned}
	    d\delta x_j=&\left(A\delta x_j+E\delta x^{(N)}\right) d t+  \left(C\delta x_j+F \delta x^{(N)}\right) d W_j+\left(\widetilde{C}\delta x_j+\widetilde{F} \delta x^{(N)}\right) d W_0 ,\\
	    \delta x_j(0)=&\ 0,
    \end{aligned}\right.
	$$
	where $\delta x^{(N)}:=\frac{1}{N}\sum_{k=1}^{N}\delta x^i_k$.
	Applying It\^o's formula to $\left\langle \delta x_i(\cdot), p_i(\cdot)\right\rangle+\sum_{j\neq i}\left\langle \delta x_j(\cdot), p_j(\cdot)\right\rangle$, we derive
	\begin{equation}\label{Ito of open loop}
		\begin{aligned}
			& \left\langle G \delta x_i-G \Gamma_0 \delta x^{(N)}, x^*_i-\Gamma_0 x^{*(N)}-\eta_0\right\rangle\\
			=&\ \mathbb{E}\left[ \int_0^T\left(\left\langle v_i, B^\top p_i^i+D^\top q_{i i}^i+\widetilde{D}^\top q_{i 0}^i\right\rangle-\left\langle\delta x_i, \left(I_n-\frac{\Gamma_1}{N}\right)^\top Q \left( x_i^* - \Gamma_1 x^{*(N)} - \eta_1 \right)\right\rangle \right.\right.\\
			&\qquad -\sum_{ j \neq i}\left\langle\delta x_j, \frac{\Gamma_1^\top}{N}Q  \left( x_i^* - \Gamma_1 x^{*(N)} - \eta_1 \right)\right\rangle\Bigg)dt\Bigg].
		\end{aligned}
	\end{equation}
	Then
	$$
	\begin{aligned}
		&\mathcal{J}_i\left(u_i^{\varepsilon}(\cdot) ; u_{-i}^*(\cdot)\right)-\mathcal{J}_i\left(u_i^*(\cdot) ; u_{-i}^*(\cdot)\right)\\
		=&\ \frac{\varepsilon}{2} \mathbb{E}\left[ \int_0^\top  \left(\left\langle Q\left(\delta x_i-\Gamma_1 \delta x^{(N)}\right), \delta x_i-\Gamma_i \delta x^{(N)}\right\rangle+\left\langle R v_i, v_i\right\rangle\right) d t\right. \\
		&\qquad +\left\langle G\left(\delta x_i(T)-\Gamma_0 \delta x^{(N)}\right), \delta x_i(T)-\Gamma_0 \delta x^{(N)}(T)\right\rangle\bigg]\\
		&+\varepsilon \mathbb{E}\left[ \int_0^T \left(\left\langle Q\left(\delta x_i-\Gamma_1 \delta x^{(N)}\right), x_i^*-\Gamma_1 x^{*(N)}-\eta_1\right\rangle+\left\langle R v_i, u_i^*-\eta_2\right\rangle\right) d t \right.\\
		&\qquad +\left\langle G\left(\delta x_i(T)-\Gamma_0 \delta x^{(N)}\right), x_i^*(T)-\Gamma_0 x^{*(N)}(T)-\eta_0\right\rangle\bigg]\\
		\equiv&\ \frac{\varepsilon}{2}X_1+\varepsilon X_2.
	\end{aligned}
	$$
	Due to the optimality of $u^*_i(\cdot)$ and the arbitrariness of $\varepsilon$, we have that $\mathcal{J}_i\left(u_i^{\varepsilon}(\cdot) ; u_{-i}^*(\cdot)\right)-\mathcal{J}_i\left(u_i^*(\cdot) ; u_{-i}^*(\cdot)\right) \geq 0$ if and only if $X_1 \geq 0$ and $X_2 = 0$. Noticing $X_1 \geq 0$ is equivalent to the convexity of $\mathcal{J}_i\left(u_i(\cdot), u_{-i}(\cdot\right)$ with respect to $u_i$. Then, simplifying the equation of $X_2$ with (\ref{Ito of open loop}), we have
	$$
	X_2=\mathbb{E} \int_0^T\left\langle B^\top p_i^i+D^T q_{i i}^i+\widetilde{D}^T q_{i 0}^i+R u_i^*-R\eta_2, v_i\right\rangle d t\,.
	$$
	Due to the arbitrariness of $v_i(\cdot)$, we obtain the condition (\ref{optimal open loop condition of centralized}). The proof is complete.
\end{proof}

For further analysis, we make the following assumption.
\begin{myassump}\label{A2}
	For $i=1, \cdots, N$, $\mathcal{J}_i\left(u_i(\cdot), u_{-i}(\cdot\right)$ is uniformly convex in $u_i$.
\end{myassump}
$\mathscr{f}$
Next, we employ a decoupling technique to analyze the solvability of FBSDEs (\ref{Hamiltonian system centralized}) and derive the state feedback form of the open-loop centralized Nash equilibrium (\ref{optimal open loop condition of centralized}).

Noting the terminal condition and structure of (\ref{Hamiltonian system centralized}), for each $i=1, \cdots, N$, we suppose
\begin{equation}\label{follower decouple form 1}
	    p_i^i(\cdot)=P_N(\cdot) x_i^*(\cdot)+K_N(\cdot) x^{*(N)}(\cdot)+\varphi_N(\cdot),
\end{equation}
\begin{equation}\label{follower decouple form 2}
	p_j^i(\cdot)=\Pi_N(\cdot) x_i^*(\cdot)+S_N(\cdot) x_j^*(\cdot)+M_N(\cdot) x^{*(N)}(\cdot)+\psi_N(\cdot),
\end{equation}
where $P_N(\cdot),K_N(\cdot),\Pi_N(\cdot),S_N(\cdot), M_N(\cdot)$ are deterministic differentiable functions satisfying\\ $P_N(T)=\left(I_n-\frac{\Gamma_0}{N}\right)^\top G$, $K_N(T)=-\left(I_n-\frac{\Gamma_0}{N}\right)^\top G\Gamma_0$, $\Pi_N(T)=-\frac{\Gamma_0^\top}{N}G$, $S_N(T)=0$, $M_N(T)=\frac{\Gamma_0^\top}{N}G\Gamma_0$, and $(\varphi_N(\cdot), \psi_N(\cdot))$ is an $\mathcal{F}^0_t$-adapted processes pair satisfying
$$
	d \varphi_N=\alpha_N d t+\beta_N d W_0, \quad \varphi_N(T)=-\left(I_n-\frac{\Gamma_0}{N}\right)^\top G \eta_0,
$$
$$
	d \psi_N=\gamma_N d t+\zeta_N d W_0 ,\quad \psi_N(T)=\frac{\Gamma_0^\top}{N} G \eta_0.
$$

\begin{myremark}
	The decoupled form (\ref{follower decouple form 2}) is different from previous literature \cite{Liang-Wang-Zhang-2025, Wang-2025} due to the presence of $C(\cdot)$ and $\widetilde{C}(\cdot)$. If $C(\cdot)=0$ and $\widetilde{C}(\cdot)=0$, then we can infer that $S(\cdot)=0$, and the decoupled form degenerates to the cases in previous literature.
\end{myremark}

By applying It\^o's formula to (\ref{follower decouple form 1}) and comparing the diffusion term coefficients, we first get
\begin{equation}\label{diffusion coefficient of p_i}
	\left\{\begin{aligned}
		&P_N\left(C x^*_i+D u^*_i+F x^{*(N)}+g\right)+ \frac{K_N }{N}\left(C x^*_i+D u^*_i+F x^{*(N)}+g\right)=q_{i i}^i, \\
		&\frac{K_N}{N}\left(C x^*_k+D u^*_k+F x^{*(N)}+g\right)=q_{i k}^i, \\
		&P_N\left(\widetilde{C} x^*_i+\widetilde{D} u^*_i+\widetilde{F} x^{*(N)}+\widetilde{g}\right)+K_N\left[\left(\widetilde{C}+\widetilde{F}\right) x^{*(N)}+\widetilde{D} u^{*(N)}+\widetilde{g}\right]+\beta_N=q_{i 0}^i,
	\end{aligned}\right.
\end{equation}
where $u^{*(N)}:=\frac{1}{N}\sum_{k=1}^{N}u^*_{i}$.
To obtain the feedback representation of the centralized open-loop Nash equilibrium, we need the following assumption.
\begin{myassump}\label{A3}
	$$
	\begin{aligned}
		& R+D^\top\left(P_N+\frac{K_N}{N}\right) D+\widetilde{D}^\top P_N \widetilde{D}>0,\\
		& R+D^\top\left(P_N+\frac{K_N}{N}\right) D+\widetilde{D}^\top (P_N+K_N) \widetilde{D}>0.
	\end{aligned}
	$$
\end{myassump}
Substituting (\ref{diffusion coefficient of p_i}) into (\ref{optimal open loop condition of centralized}), we first get
$$
u^{*(N)}=-\left(\mathcal{R}_N+\widetilde{D}^\top K_N \widetilde{D}\right)^{-1}\left[\left(\widetilde{P}_N+\widetilde{K}_N\right) x^{*(N)}+ \Phi_N\right],
$$
where
$$
\begin{aligned}
	& \mathcal{R}_N:=R+D^\top \left(P_N+\frac{K_N}{N}\right) D+\widetilde{D}^\top P_N \widetilde{D} ,\\
	& \widetilde{P}_N:=B^\top P_N+D^\top \left(P_N+\frac{K_N}{N}\right) C+\widetilde{D}^\top P_N \widetilde{C} ,\\
	& \widetilde{K}_{N}:=B^\top K_N+D^\top \left(P_N+\frac{K}{N}\right) F+\widetilde{D}^\top P_N \widetilde{F}+\widetilde{D}^\top K_N\left(\widetilde{C}+\widetilde{F}\right),\\
	& \Phi_N:=B^\top \varphi_N+D^\top \left(P_N+\frac{K_N}{N}\right) g+\widetilde{D}^\top \left(P_N+K_N\right) \widetilde{g}+\widetilde{D}^\top \beta_N-R\eta_2 ,
\end{aligned}
$$
and then we have
$$
u_i^*=-\mathcal{R}_N^{-1} \widetilde{P}_N x_i^*+\left[\mathcal{R}_N^{-1} \widetilde{P}_N-\left(\mathcal{R}_N+\widetilde{D}^\top K_N \widetilde{D}\right)^{-1}\left(\widetilde{P}_N+\widetilde{K}_N\right)\right] x^{*(N)}-\left(\mathcal{R}_N+\widetilde{D} K_N \widetilde{D}\right)^{-1} \Phi_N.
$$
Similarly, by applying It\^o's formula to (\ref{follower decouple form 2}) and comparing the diffusion coefficients, we first get
\begin{equation}\label{diffusion coefficient of p_j}
	\left\{\begin{aligned}
			q^i_{j i} &=\Pi_N\left(C x^*_i+D u^*_i+F x^{*(N)}+g\right)+\frac{M_N}{N}\left(C x^*_i+D u^*_i+F x^{*(N)}+g\right), \\
			q^i_{j j} &=S_N\left(C x^*_j+D u^*_j+F x^{*(N)}+g\right)+\frac{M_N}{N}\left(C x^*_j+D u^*_j+F x^{*(N)}+g\right) , \\
			q^i_{j k} &=\frac{M_N}{N}\left(C x^*_k+D u^*_k+F x^{*(N)}+g\right),\quad k \neq i, j,  \\
			q^i_{j0}&=\Pi_N\left(\widetilde{C} x^*_i+\widetilde{D} u_i^*+\widetilde{F} x^{*(N)}+\widetilde{g}\right)+S_N\left(\widetilde{C} x^*_j+\widetilde{D} u^*_j+\widetilde{F} x^{*(N)}+\widetilde{g}\right) \\
			&\quad +M_{N}\left[\left(\widetilde{C}+\widetilde{F}\right) x^{*(N)}+\widetilde{D} u^{*(N)}+\widetilde{g}\right]+\zeta_N .
	\end{aligned}\right.
\end{equation}
Combining equations (\ref{follower decouple form 1}), (\ref{follower decouple form 2}), (\ref{diffusion coefficient of p_i}) and (\ref{diffusion coefficient of p_j}), we obtain the expressions for $p^{i(N)}(\cdot)$, $q^{i(N)}(\cdot)$, and $q_0^{i(N)}(\cdot)$ as follows:
$$
p^{i(N)}=\left(\frac{P_N}{N}+\frac{N-1}{N} \Pi_N-\frac{S_N}{N}\right) x_i^*+\left(\frac{K_N}{N}+S_N+\frac{N-1}{N} M_N\right) x^{*(N)}+\frac{\varphi_N}{N}+\frac{N-1}{N} \psi_N,
$$
$$
\begin{aligned}
	q^{i(N)}=&\ \frac{1}{N}\left\{\left(P_N+\frac{K_N}{N}-S_N-\frac{M_N}{N}\right)\left(C-D \mathcal{R}_N^{-1} \widetilde{P}_N\right) x_i^*+\bigg[\left(NS_N+M_N\right)C\right.\\
	&+\left(P_N+\frac{K_N}{N}+(N-1) S_N+\frac{N-1}{N} M_N\right)F+\left(P_N+\frac{K_N}{N}-S_N-\frac{M_N}{N}\right)D\mathcal{R}_N^{-1} \widetilde{P}_N\\
	& -\left(P_N+\frac{K_N}{N}+(N-1) S_N+\frac{N-1}{N} M_N\right)D\left(\mathcal{R}_N+\widetilde{D}^\top K_N \widetilde{D}\right)^{-1}\left(\widetilde{P_N}+\widetilde{K}_N\right)\bigg]x^{*(N)} \\
	&\left. +\left(P_N+\frac{K_N}{N}+(N-1) S_N+\frac{N-1}{N} M_N\right)\left(g-D\left(\mathcal{R}_N+\widetilde{D} K_N \widetilde{D}\right)^{-1} \Phi_N\right)\right\},
\end{aligned}
$$
$$
\begin{aligned}
	q_0^{i(N)}=&\ \frac{1}{N}\bigg\{ \big(P_N+(N-1) \Pi_N-S_N\big)\left(\widetilde{C}-\widetilde{D} \mathcal{R}_N^{-1} \widetilde{P}_N\right) x_i^*+\bigg[\big(K_N+(N-1) M_N+N S_N\big)\widetilde{C} \\
	&+\big(P_N+K_N+(N-1)\left(M_N+S_N-\Pi_N\right)\big)\widetilde{F}+\big(P_N+(N-1) \Pi_N-S_N\big)\widetilde{D}\mathcal{R}_N^{-1} \widetilde{P}_N \\
	&\left.-\big(P_N+K_N+(N-1)\left(M_N+S_N-\Pi_N\right)\big)\widetilde{D}\left(\mathcal{R}_N+\widetilde{D}^\top K_N \widetilde{D}\right)^{-1}\left(\widetilde{P}_N+\widetilde{K}_N\right)\right] x^{*(N) } \\
	&+\big(P_N+K_N+(N-1)\left(M_N+S_N-\Pi_N\right)\big)\left[\widetilde{g}-\widetilde{D}\left(\mathcal{R}_N+\widetilde{D} K_N \widetilde{D}\right)^{-1} \Phi_N\right]\\
	&+\beta_N+\frac{N-1}{N} \xi_N\bigg\}.
\end{aligned}
$$
Then, by applying It\^o's formula to (\ref{follower decouple form 1}) and comparing the drift coefficients, we obtain the equation satisfied by the coefficients of the term $x^*_i(\cdot)$:
\begin{equation}\label{P_N equation}
	\left\{\begin{aligned}
		&\dot{P}_N  +P_N A+A^\top P_N+C^\top\left(P_N+\frac{K_N}{N}\right) C+\widetilde{C}^\top P_N \widetilde{C}  -\widetilde{P}_N^\top \mathcal{R}_N^{-1} \widetilde{P}_N \\
		& +E^\top \left(\frac{P_N}{N}+\frac{N-1}{N} \Pi_N-\frac{S_N}{N}\right)+ \frac{F^\top}{N}\left(P_N+\frac{K_N}{N}-S_N-\frac{M_N}{N}\right)\left(C-D \mathcal{R}_N^{-1} \widetilde{P}_N\right) \\
		& + \frac{\widetilde{F}^\top}{N}\big(P_N+( N-1) \Pi_N-S_N\big)\left(\widetilde{C}-\widetilde{D} \mathcal{R}_N^{-1} \widetilde{P}_N\right)+\left(I_n-\frac{\Gamma_1}{N}\right)^\top Q=0, \\
		&P_N(T) =\left(I_n-\frac{\Gamma_0}{N}\right)^\top G ,
	\end{aligned}\right.
\end{equation}
the equation satisfied by the coefficients of the term $x^{*(N)}(\cdot)$:
\begin{equation}\label{K_N equation}
\left\{\begin{aligned}
	& \dot{K}_N+K_N(A+E)+P_N E+A^\top K_N+C\left(P_N+\frac{K_N}{N}\right) F+\widetilde{C}^\top K_N \widetilde{C}\\
	& +\widetilde{C}^\top \left(P_N+K_N\right) \widetilde{F}+\widetilde{P}_N^\top \mathcal{R}_N^{-1} \widetilde{P}_N-\left(\widetilde{P}_N^\top+K_N B +\widetilde{C}^\top K_N \widetilde{D} \right)\left(\mathcal{R}_N+\widetilde{D}^\top K_N \widetilde{D}\right)^{-1} \\
	& \times\left(\widetilde{P}_N+\widetilde{K}_N\right)+E^\top \left(\frac{K_n}{N}+S_N+\frac{N-1}{N} M_n\right)+F^\top \left(S_N+\frac{M_N}{N}\right)C\\
    & +\widetilde{F}^\top\left(\frac{K_n}{N}+S_N+\frac{N-1}{N} M_n\right)\widetilde{C} +F^\top \left(\frac{P_N}{N}+\frac{K_N}{N^2}+\frac{N-1}{N}S_{N}+\frac{N-1}{N^2}M_N\right) F \\
	& +\widetilde{F}^\top \left(\frac{P_N}{N}+\frac{K_N}{N}+\frac{N-1}{N}\left(\Pi_N+S_N+M_N\right)\right)\widetilde{F}^\top \\
	& +\left[F^\top \left(\frac{P_N}{N}+\frac{K_N}{N^2}-\frac{S_{N}}{N}-\frac{M_N}{N^2}\right) D+\widetilde{F}^\top \left(\frac{P_N}{N}+\frac{N-1}{N}\Pi_N-\frac{S_{N}}{N}\right)\widetilde{D}\right]\mathcal{R}_N^{-1} \widetilde{P}_N \\
	& -\left[F^\top \left(\frac{P_N}{N}+\frac{K_N}{N^2}+\frac{N-1}{N}S_{N}+\frac{N-1}{N^2}M_N\right)D\right.\\
    & \left.+\widetilde{F}^\top \left(\frac{P_N}{N}+\frac{K_N}{N}+\frac{N-1}{N}\left(\Pi_N+M_N+S_N\right)\right)\widetilde{D}\right]\left(\mathcal{R}_N+\widetilde{D}^\top K_N \widetilde{D}\right)^{-1}\\
	& \times\left(\widetilde{P}_N+\widetilde{K}_N\right)-\left(I_n-\frac{\Gamma_1}{N}\right)^\top Q \Gamma_1 =0,\\
	& K_N(T)=-\left(I_n-\frac{\Gamma_0}{N}\right)^\top G \Gamma_0,
\end{aligned}\right.
\end{equation}
and the equation satisfied by the non-homogeneous term:
\begin{equation}\label{varphi_N equation}
\left\{\begin{aligned}
	d \varphi_N= & -\left\{\left(A^\top+\frac{E^\top}{N}\right) \varphi_N+E^\top \frac{N-1}{N} \psi_N+\left(P_N+K_N\right) f-\bigg[\left(P_N+K_N\right) B\right.\\
	&+C^\top\left(P_N+\frac{K_N}{N}\right) D+\widetilde{C}^\top P_N \widetilde{D}+F^\top\left(\frac{P_N}{N}+\frac{K_N}{N^2}+\frac{N-1}{N} S_N+\frac{N-1}{N^2} M_N\right) D\\
	&+\widetilde{F}^\top\left(\frac{P_N}{N}+\frac{K_N}{N}+\frac{N-1}{N}\left(M_N+S_N+\Pi_N\right)\right) \widetilde{D}\bigg]\left(\mathcal{R}_N+\widetilde{D} K_N \widetilde{D}\right)^{-1}\Phi_N\\
    &+\left[C^\top\left(P_N+\frac{K_N}{N}\right)+F^\top\left(\frac{P_N}{N}+\frac{K_N}{N^2}+\frac{N-1}{N} S_N+\frac{N-1}{N^2} M_N\right)\right] g\\
	&+ \left[\widetilde{C}^\top \left(P_N+K_N\right)P_N+\widetilde{F}^\top\left(\frac{P_N}{N}+\frac{K_N}{N}+\frac{N-1}{N}\left(M_N+S_N+\Pi_N\right)\right)\right] \widetilde{g}  \\
	& \left.+\widetilde{F}^\top \left(\frac{\beta_N}{N}+\frac{N-1}{N^2} \zeta_N\right)+\widetilde{C} \beta_N\right\}{d t}+\beta_N d W_0, \\
	\varphi_N(T)= & -\left(I_n-\frac{\Gamma_0}{N}\right)^\top G \eta_0.
\end{aligned}\right.
\end{equation}
Similarly, by applying It\^o's formula to (\ref{follower decouple form 2}) and comparing the drift coefficients, we obtain the equation satisfied by the coefficients of the term $x^*_i(\cdot)$:
\begin{equation}\label{Pi_N equation}
\left\{\begin{aligned}
	& \dot{\Pi}_N+\Pi_N A+A^\top \Pi_N+\widetilde{C}^\top \Pi_N \widetilde{C}+E^\top\left(\frac{P_N}{N}+\frac{N-1}{N} \Pi_N-\frac{S_N}{N}\right) \\
	& +F^\top\left(\frac{P_N}{N}+\frac{K_N}{N^2}-\frac{S_N}{N}-\frac{M_N}{N^2}\right) C+\widetilde{F}^\top\left(\frac{P_N}{N}+\frac{N-1}{N}\Pi_N-\frac{S_N}{N}\right) \widetilde{C} \\
	&+  \left(\Pi_N B+\widetilde{C}^\top \Pi_N \widetilde{D}+F^\top \left(\frac{P_N}{N}+\frac{K_N}{N^2}-\frac{S_N}{N}-\frac{M_N}{N^2}\right) D\right.\\
	&\left.+\widetilde{F}^\top\left(\frac{P_N}{N}+\frac{N-1}{N}\Pi_N-\frac{S_N}{N}\right) \widetilde{C}\right) \mathcal{R}_N^{-1} \widetilde{P}_N-\frac{\Gamma_1^\top}{N} Q=0, \\
	&\Pi_N(T)= -\frac{\Gamma_0^\top}{N} G,
\end{aligned}\right.
\end{equation}
the equation satisfied by the coefficients of the term $x^*_j(\cdot)$:
\begin{equation}\label{S_N equation}
\left\{\begin{aligned}
	&\dot{S}_N+S_N A+A^T S_N+C^T\left(S_N+\frac{M_N}{N}\right) C+\widetilde{C}^T S_N \widetilde{C} \\
	&-\left[S_N B+C^\top\left(S_N+\frac{M_N}{N}\right) D+\widetilde{C}^\top S_N \widetilde{D}\right] \mathcal{R}_N^{-1} \widetilde{P_N}=0, \\
	&S_N(T)=0,
\end{aligned}\right.
\end{equation}
the equation satisfied by the coefficients of the term $x^{*(N)}(\cdot)$:
\begin{equation}\label{M_N equation}
\left\{\begin{aligned}
	& \dot{M}_N+A^\top M_N+M_N(A+E)+C^\top\left(S_N+\frac{M_N}{N}\right) F+\widetilde{C}^\top\left(\Pi_N+S_N\right) \widetilde{F} \\
    & +\widetilde{C}^\top M_N\left(\widetilde{C}+\widetilde{F}\right)+E^\top\left(\frac{K_N}{N}+S_N+\frac{N-1}{N} M_N\right)\\
    & +F^\top\left(\frac{P_N}{N}+\frac{K_N}{N^2}-\frac{S_N}{N}-\frac{M_N}{N^2}\right) F+F^\top\left( S_N+\frac{M_N}{N}\right)(C+F)\\
    & +\widetilde{F}^\top\left(\frac{P_N}{N}+\frac{N-1}{N} \Pi_N-\frac{S_N}{N}\right) \widetilde{F} +\widetilde{F}\left(\frac{K_n}{N}+\frac{N-1}{N} M_N+ S_N\right)\left(\widetilde{C}+\widetilde{F}\right)\\
    & +\left(\Pi_N+S_N\right) E +\left\{\left[\left(\Pi_N+S_N\right) B+C^\top\left(S_N+\frac{M_N}{N}\right) D+\widetilde{C}^\top\left(\Pi_N+S_N\right) \widetilde{D}\right.\right. \\
    & \left.+F^\top\left(\frac{P_N}{N}+\frac{K_N}{N^2}-\frac{S_N}{N}-\frac{M_N}{N^2}\right) D+\widetilde{F}^\top\left(\frac{P_N}{N}+\frac{N-1}{N} \Pi_N-\frac{S_N}{N}\right) \widetilde{D}\right\} \mathcal{R}_N^{-1} \widetilde{P}_N\\
    & -\left\{\left(\Pi_N+S_N\right) B+M_N B+C^\top\left(S_N+\frac{M_N}{N}\right) D+\widetilde{C}^\top\left(\Pi_N+S_N+M_N \right) \widetilde{D}\right. \\
    & +F^\top\left(\frac{P_N}{N}+\frac{K_N}{N^2}+\frac{N-1}{N} S_N+\frac{N-1}{N^2} M_N\right) D \\
    & \left.+\widetilde{F}_N^\top\left[\frac{P_N}{N}+\frac{K_N}{N}+\frac{N-1}{N}\left(\Pi_N+S_N+M_N\right) \widetilde{D}\right]\right\}\left(\mathcal{R}_N+\widetilde{D} K_N \widetilde{D}\right)^{-1}\\
    & \times\left(\widetilde{P_N}+\widetilde{K}_N\right)+\frac{\Gamma_1^\top}{N} Q \Gamma_1=0,\\
    &M_N(T)=\frac{\Gamma_0^\top}{N} G \Gamma_0,
\end{aligned}\right.
\end{equation}
and the equation satisfied by the non-homogeneous term:
\begin{equation}\label{psi_N equation}
\left\{\begin{aligned}
	d \psi_N  =&-\left\{\left(A^\top+\frac{N-1}{N} E^\top\right) \psi_N+\frac{E^\top}{N} \varphi_N\right. +\left(\Pi_N+S_N+M_N\right)\\
    & \times\left(f-B\left(\mathcal{R}_N+\widetilde{D} K_N \widetilde{D}\right)^{-1} \Phi_N\right)+\left[C^\top\left(S_N+\frac{M_N}{N}\right)\right.\\
	& \left.+F^\top\left(\frac{P_N}{N}+\frac{K_N}{N^2}+\frac{N-1}{N} S_N+\frac{N-1}{N^2} M_N\right)\right]\\
    & \times\left[g-D\left(\mathcal{R}_N+\widetilde{D}K_N \widetilde{D}\right)^{-1} \Phi_N\right] +\left[\widetilde{C}^\top\left(\Pi_N+S_N+M_N\right)\right.\\
	& \left.+\widetilde{F}_N^\top\left(\frac{P_N}{N}+\frac{K_N}{N}+\frac{N-1}{N}\left(\Pi_N+S_N+M_N\right)\right) \right]\\
    & \times\left[\widetilde{g}-\widetilde{D}\left(\mathcal{R}_N+\widetilde{D}K_N \widetilde{D}\right)^{-1} \Phi_N\right] \\
	& +\left(\widetilde{C}^\top+\frac{N-1}{N} \widetilde{F}^\top\right) \zeta_N+\frac{\widetilde{F}^\top}{N} \beta_N+\frac{\Gamma_1^\top}{N} Q \eta_1\bigg\}dt+\zeta_NdW_0,\\
	\psi_N(T)=&\ \frac{\Gamma_0^\top}{N} G \eta_0.
\end{aligned}\right.
\end{equation}
From the above derivation, we can further obtain the following results about the feedback representation of centralized open-loop Nash equilibrium of Problem \ref{problem centralized}.
\begin{mythm}\label{theorem feedback representation of open loop centralized}
	Let Assumptions \ref{A1}, \ref{A2}, \ref{A3} hold and the coupled Riccati equation system (\ref{P_N equation}), (\ref{K_N equation}), (\ref{Pi_N equation}), (\ref{S_N equation}), (\ref{M_N equation}) admit a solution, then centralized open-loop Nash equilibrium of Problem \ref{problem centralized} has the feedback representation
	\begin{equation}\label{optimal feedback of open loop condition of centralized}\vspace{-1mm}
    \begin{aligned}
		u_i^*=&-\mathcal{R}_N^{-1} \widetilde{P}_N x_i^*+\left[\mathcal{R}_N^{-1} \widetilde{P}_N-\left(\mathcal{R}_N+\widetilde{D}_N^\top K_N \widetilde{D}\right)^{-1}
               \left(\widetilde{P}_N+\widetilde{K}_N\right)\right] x^{*(N)}\\
              &-\left(\mathcal{R}_N+\widetilde{D} K_N \widetilde{D}\right)^{-1} \Phi_N,
	\end{aligned}
    \end{equation}
	where $x^*_i(\cdot)\in L^2_{\mathcal{F}_t}\left([0,T];\mathbb{R}^n\right)$ is the solution to
	\begin{equation}\label{x^*_i equation}
		\left\{\begin{aligned}
			d x_i^*&=\left\{\left(A-B \mathcal{R}_N^{-1} \widetilde{P}_N\right) x_i^*+\left[E+B\mathcal{R}_N^{-1} \widetilde{P}_N-B\left(\mathcal{R}_N+\widetilde{D}^\top K_N \widetilde{D}\right)^{-1}\right.\right.\\
			&\qquad \times\left.\left.\left(\widetilde{P}_N+\widetilde{K}_N\right)\right]x^{*(N)} +f-B\left(\mathcal{R}_N+\widetilde{D}^\top K_N \widetilde{D}\right)^{-1} \Phi_N\right\} d t \\
			&\quad +\left\{\left(C-D \mathcal{R}_N^{-1} \widetilde{P}_N\right) x_i^*+\left[F+D\mathcal{R}_N^{-1} \widetilde{P}_N-D\left(\mathcal{R}_N+\widetilde{D}^\top K_N \widetilde{D}\right)^{-1}\right.\right.\\
			&\qquad \times\left.\left.\left(\widetilde{P}_N+\widetilde{K}_N\right)\right] x^{*(N)} \right. \left.+g-D\left(\mathcal{R}_N+\widetilde{D}^\top K_N \widetilde{D}\right)^{-1} \Phi_N\right\} d W_i \\
			&\quad +\left\{\left(\widetilde{C}-\widetilde{D} \mathcal{R}_N^{-1} \widetilde{P}_N\right) x_i^*+\left[\widetilde{F}+\widetilde{D}\mathcal{R}_N^{-1} \widetilde{P}_N-\widetilde{D}\left(\mathcal{R}_N+\widetilde{D}^\top K_N \widetilde{D}\right)^{-1} \right.\right. \\
			&\qquad \times\left.\left.\left(\widetilde{P}_N+\widetilde{K}_N\right)\right]x^{*(N)}+\widetilde{g}-\widetilde{D}\left(\mathcal{R}_N+\widetilde{D}^\top K_N \widetilde{D}\right)^{-1} \Phi_N\right\} d W_0,\\
			x_i^*(0)&=\xi_i,
		\end{aligned}\right.
	\end{equation}
	and $x^{*(N)}(\cdot)\in L^2_{\mathcal{F}_t}\left([0,T];\mathbb{R}^n\right)$ is the solution to
	\begin{equation}\label{x^{*(N)} equation}
	\left\{\begin{aligned}
		d x^{*(N)}&=\left\{\left[A+E-\left(\mathcal{R}_N+\widetilde{D}^\top K_N \widetilde{D}\right)^{-1}\left(\widetilde{P}_N+\widetilde{K}_N\right)\right] x^{*(N)}+f\right.\\
        &\qquad \left.-B\left(\mathcal{R}_N+\widetilde{D}^\top K_N \widetilde{D}\right)^{-1} \Phi_N\right\} d t \\
		&\quad +\frac{1}{N} \sum_{j=1}^N\left\{\left(C-D \mathcal{R}_N^{-1} \widetilde{P}_N\right) x_j^*+\left[F+D\mathcal{R}_N^{-1} \widetilde{P}_N-D\left(\mathcal{R}_N+\widetilde{D}^\top K_N \widetilde{D}\right)^{-1}\right.\right. \\
		&\qquad \times\left.\left(\widetilde{P}_N+\widetilde{K}_N\right)\right] x^{*(N)}\left. +g-D\left(\mathcal{R}_N+\widetilde{D}^\top K_N \widetilde{D}\right)^{-1} \Phi_N\right\} d W_j \\
		&\quad +\left\{\left[\widetilde{C}+ \widetilde{F}-\left(\mathcal{R}_N+\widetilde{D}^\top K_N \widetilde{D}\right)^{-1}\left(\widetilde{P}_N+\widetilde{K}_N\right)\right] x^{*(N)}+\widetilde{g}\right.\\
        &\qquad \left.-\widetilde{D}\left(\mathcal{R}_N+\widetilde{D}^\top K_N \widetilde{D}\right)^{-1} \Phi_N\right\} d W_0,\\
		x^{*(N)}(0)&=\frac{1}{N}\sum_{j=1}^N \xi_j.
	\end{aligned}\right.
    \end{equation}
\end{mythm}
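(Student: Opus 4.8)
The plan is to show that the processes built from a solution of the coupled Riccati system solve the FBSDEs (\ref{Hamiltonian system centralized}) together with the stationarity condition (\ref{optimal open loop condition of centralized}), and then to quote Theorem \ref{theorem open loop centralized}. Concretely: given a solution $(P_N,K_N,\Pi_N,S_N,M_N)$ of (\ref{P_N equation}), (\ref{K_N equation}), (\ref{Pi_N equation}), (\ref{S_N equation}), (\ref{M_N equation}) with the prescribed terminal values, these are continuous hence bounded on $[0,T]$, so by Assumption \ref{A3} the matrices $\mathcal{R}_N$ and $\mathcal{R}_N+\widetilde{D}^\top K_N\widetilde{D}$ are invertible with bounded inverses. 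First I would solve the linear BSDEs (\ref{varphi_N equation}) and (\ref{psi_N equation}) for the $\mathcal{F}^0_t$-adapted pairs $(\varphi_N,\beta_N)$ and $(\psi_N,\zeta_N)$, which exist uniquely in $L^2_{\mathcal{F}^0_t}$ under Assumption \ref{A1}. Then I would define $u_i^*$ by (\ref{optimal feedback of open loop condition of centralized}), let $x_i^*$ be the unique strong solution of the linear SDE (\ref{x^*_i equation})—whose coefficients are bounded, adapted, and whose inhomogeneous term is in $L^2_{\mathcal{F}_t}$—and observe that averaging over $i$ produces exactly (\ref{x^{*(N)} equation}) for $x^{*(N)}$, so all constructed processes lie in the required spaces.

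Next I would set $p_i^i:=P_N x_i^*+K_N x^{*(N)}+\varphi_N$ and $p_j^i:=\Pi_N x_i^*+S_N x_j^*+M_N x^{*(N)}+\psi_N$ as in (\ref{follower decouple form 1})--(\ref{follower decouple form 2}), and define the martingale integrands $q_{ik}^i,q_{i0}^i,q_{jk}^i,q_{j0}^i$ by the right-hand sides of (\ref{diffusion coefficient of p_i}) and (\ref{diffusion coefficient of p_j}). Using $\sum_{j\neq i}x_j^*=Nx^{*(N)}-x_i^*$, the averaged quantities $p^{i(N)}$, $q^{i(N)}$, $q_0^{i(N)}$ reduce to the displayed expressions. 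Applying It\^o's formula to $p_i^i$ and $p_j^i$ along $x_i^*$, $x_j^*$, $x^{*(N)}$, and substituting the ODEs (\ref{P_N equation})--(\ref{M_N equation}) and the BSDEs (\ref{varphi_N equation}), (\ref{psi_N equation}) for the time-derivatives of the coefficients, I would check that the resulting drift reproduces precisely the drift in (\ref{Hamiltonian system centralized}) and that the diffusion coefficients coincide with the chosen $q$'s — this is just the reverse of the coefficient-matching computation already carried out before the theorem. The terminal values also match: $p_i^i(T)=(I_n-\tfrac{\Gamma_0}{N})^\top G\big(x_i^*(T)-\Gamma_0 x^{*(N)}(T)-\eta_0\big)$ from the terminal conditions on $P_N,K_N,\varphi_N$, and similarly $p_j^i(T)=-\tfrac{\Gamma_0^\top}{N}G\big(x_i^*(T)-\Gamma_0 x^{*(N)}(T)-\eta_0\big)$.

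Then I would verify the stationarity condition: substituting the ansatz and (\ref{diffusion coefficient of p_i}) into $B^\top p_i^i+D^\top q_{ii}^i+\widetilde{D}^\top q_{i0}^i+R u_i^*$ and using the defining formula for $u_i^*$ shows this vanishes — indeed (\ref{optimal feedback of open loop condition of centralized}) was obtained exactly by solving this linear equation for $u_i^*$, which is where the invertibility from Assumption \ref{A3} enters. Hence $(x_i^*,p_j^i,q_{jk}^i,q_{j0}^i)$ is an adapted solution of (\ref{Hamiltonian system centralized}) satisfying (\ref{optimal open loop condition of centralized}). Assumption \ref{A2} gives uniform, hence ordinary, convexity of each $\mathcal{J}_i$ in $u_i$, so Theorem \ref{theorem open loop centralized} applies and yields that $(u_1^*,\dots,u_N^*)$ is a centralized open-loop Nash equilibrium; uniform convexity further makes each agent's best response to $u_{-i}^*$ unique, so this is the equilibrium, and by construction it is given by the feedback form (\ref{optimal feedback of open loop condition of centralized}).

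The hard part will be the algebraic verification in the It\^o step that the drift of the decoupling ansatz matches (\ref{Hamiltonian system centralized}): it is a long bookkeeping exercise, complicated by the coupling through the averaged quantities $x^{*(N)},p^{i(N)},q^{i(N)},q_0^{i(N)}$ and by the fact that the diagonal process $p_i^i$ and the off-diagonal processes $p_j^i$ have genuinely different structure, so one must separately collect the coefficients of $x_i^*$, of $x_j^*$ for $j\neq i$, of $x^{*(N)}$, and the non-homogeneous part, and recognise each block as one of the equations (\ref{P_N equation})--(\ref{M_N equation}) or (\ref{varphi_N equation}), (\ref{psi_N equation}). Since most of this matching was already performed to derive those equations, in the write-up I would state that the construction simply inverts that derivation and only flag the terminal conditions and the stationarity identity explicitly; the secondary point, well-posedness of the closed-loop system (\ref{x^*_i equation})--(\ref{x^{*(N)} equation}) and membership of all processes in the stated $L^2$ spaces, is immediate from boundedness of the feedback gains and square-integrability of the inhomogeneous data under Assumption \ref{A1}.
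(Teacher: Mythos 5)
Your proposal is correct and follows essentially the same route as the paper: the paper offers no separate proof of this theorem beyond the preceding decoupling derivation (ansatz (\ref{follower decouple form 1})--(\ref{follower decouple form 2}), It\^o's formula, coefficient matching, then an appeal to Theorem \ref{theorem open loop centralized} under Assumptions \ref{A2}--\ref{A3}), which is exactly what you propose to run in the rigorous verification direction. The only minor imprecision is that equation (\ref{x^*_i equation}) for a single $i$ and the pair (\ref{varphi_N equation}), (\ref{psi_N equation}) are not self-contained — one must solve the coupled $N$-dimensional linear SDE system (and the coupled linear BSDE system for $(\varphi_N,\psi_N)$) jointly before reading off (\ref{x^{*(N)} equation}) by averaging — but this is routine and does not affect the argument.
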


\subsection{Open-loop decentralized Nash equilibrium}

In this subsection, we consider the limit for the solutions of Riccati equations (\ref{P_N equation}), (\ref{K_N equation}), (\ref{Pi_N equation}), (\ref{S_N equation}), (\ref{M_N equation}) and ODEs (\ref{varphi_N equation}), (\ref{psi_N equation}) and design open-loop decentralized Nash equilibrium of Problem \ref{problem decentralized}. First, we introduce the following equations
\begin{equation}\label{P equation}
\left\{\begin{aligned}
	&\dot{P}+P A+A^\top P+C^\top P C+\widetilde{C}^\top P \widetilde{C}-\widetilde{P}^\top \mathcal{R}^{-1} \widetilde{P}+Q=0 , \\
	&P(T)=G ,
\end{aligned}\right.
\end{equation}

\begin{equation}\label{K equation}
\left\{\begin{aligned}
	&\dot{K}+K(A+E)+P E+A^\top K+C^\top P F+\widetilde{C}^\top(P+K) \widetilde{F} \\
	&+\widetilde{P}^\top \mathcal{R}^{-1} \widetilde{P}-\left(\widetilde{P}^\top+K B+\widetilde{C}^\top K \widetilde{D}\right)\left(\mathcal{R}
    +\widetilde{D}^\top K \widetilde{D}\right)^{-1}\left(\widetilde{P}+\widetilde{K}\right)-Q \Gamma_1=0,\\
	&K(T)=-G \Gamma_0,
\end{aligned}\right.
\end{equation}
\begin{equation}\label{varphi equation}
\left\{\begin{aligned}
	d \varphi= & -\bigg\{A^\top \varphi+(P+K) f-\left[(P+K) B-C^\top P D-\widetilde{C}(P+K) \widetilde{D}\right] \\
	& \left.\times\left(\mathcal{R}+\widetilde{D}^\top K \widetilde{D}\right)^{-1} \Phi+C^\top P g+\widetilde{C}^\top(P+K) \widetilde{g}+\widetilde{C} \beta-Q\eta_1\right\} d t+\beta d W_0 ,\\
	\varphi(T)= & -G \eta_0,
\end{aligned}\right.
\end{equation}

\begin{equation}\label{Pi equation}
\left\{\begin{aligned}
	&\dot{\Pi}+\Pi A+A^\top \Pi+\widetilde{C}^\top \Pi \widetilde{C}+E^\top \left(P+\Pi\right)+F^\top PC+\widetilde{F}^\top \left(P+\Pi\right) \widetilde{C}\\
	&+\left(\Pi B+\widetilde{C}^\top \Pi \widetilde{D}+F^\top PD+\widetilde{F}^\top \left(P+\Pi\right)\widetilde{D}\right) \mathcal{R}^{-1} \widetilde{P}-\Gamma_1^\top Q=0 ,\\
	&\Pi(T)=-\Gamma_0^\top G,
\end{aligned}\right.
\end{equation}
\begin{equation}\label{S equation}
\left\{\begin{aligned}
	&\dot{S}+S A+A^\top S+C^\top S C+\widetilde{C} S \widetilde{C}-\left(S B+C^\top S D+\widetilde{C}^\top S \widetilde{D}\right) R^{-1} \widetilde{P}=0 ,\\
	&S(T)=0,
\end{aligned}\right.
\end{equation}
\begin{equation}\label{M equation}
	\left\{\begin{aligned}
	&\dot{M}+A^\top M+M(A+E)+\widetilde{C}\Pi \widetilde{F}+\widetilde{C}^\top M\left(\widetilde{C}+\widetilde{F}\right)+\Pi E \\
	&+E^\top(K+M)+F^\top PF+\widetilde{F} \left(K+M\right) \widetilde{C}+\widetilde{F}^\top \left(P+\Pi+K+M\right)\widetilde{F} \\
	&+\left[\Pi B+F^\top P D+\widetilde{C}^\top\Pi \widetilde{D}+\widetilde{F}^\top \left(P+\Pi\right) \widetilde{D}\right] \mathcal{R}^{-1} \widetilde{P} \\
	&-\left[\Pi B+M B+\widetilde{C}(\Pi+M) \widetilde{D} +F^\top P D+\widetilde{F}^\top \left(P+\Pi+K+M\right)\widetilde{D}\right] \\
	&\times\left(\mathcal{R}+\widetilde{D}K \widetilde{D}\right)^{-1}\left(\widetilde{P}+\widetilde{K}\right)+\Gamma_1^\top Q\Gamma_1=0 ,\\
	&M(T)=\Gamma_0^\top G\Gamma_0,
\end{aligned}\right.
\end{equation}
\begin{equation}\label{psi equation}
\left\{\begin{aligned}
	d \psi=&-\left\{ \left(A^\top+E^\top\right) \psi+E^\top\varphi+(\Pi+M)\left(f-B(\mathcal{R}+\widetilde{D} K \widetilde{D})^{-1} \Phi\right)\right. \\
	& +F^\top P\left[g-D\left(\mathcal{R}+\widetilde{D} K \widetilde{D}\right)^{-1} \Phi\right] \\
	&+\left[\widetilde{C}^\top(\Pi+M)+\widetilde{F}^\top(P+K+\Pi+M)\right]\left[\widetilde{g}-\widetilde{D}\left(\mathcal{R}+\widetilde{D} K \widetilde{D}\right)^{-1} \Phi\right] \\
	& \left.+\left(\widetilde{C}^\top+\widetilde{F}^\top\right) \zeta+\widetilde{F}^\top\beta+\Gamma_1^\top Q\eta_1\right\} d t+\zeta d W_0, \\
	\psi(T)&=\Gamma_0^\top G\eta_0,
\end{aligned}\right.
\end{equation}
where
$$
\begin{aligned}
	& \mathcal{R}:=R+D^\top P D+\widetilde{D}^\top P \widetilde{D}, \\
	& \widetilde{P}:=B^\top P+D^\top P C+\widetilde{D}^\top P \widetilde{C} ,\\
	& \widetilde{K}:=B^\top K+D^\top P F+\widetilde{D}^\top P \widetilde{F}+\widetilde{D}^\top K\left(\widetilde{C}+\widetilde{F}\right),\\
	& \Phi:=B^\top \varphi+D^\top P g+\widetilde{D}^\top\left(P+K\right) \widetilde{g}+\widetilde{D}^\top \beta-R\eta_2.
\end{aligned}
$$
And we need the following assumptions.
\begin{myassump}\label{A4}
	$$
	\begin{aligned}
		& R+D^\top P D+\widetilde{D}^\top P \widetilde{D}>0,\\
		& R+D^\top P D+\widetilde{D}^\top (P+K) \widetilde{D}>0.
	\end{aligned}
	$$
\end{myassump}

\begin{myassump}\label{A5}
	The equation (\ref{K equation}) admits unique solution in $C^1\left([0,T];\mathbb{R}^{n\times n}\right)$.
\end{myassump}

Then, we have the following result.
\begin{mythm}
	Let Assumptions \ref{A1}, \ref{A2}, \ref{A3}, \ref{A4}, \ref{A5} hold, then the equations (\ref{P equation}), (\ref{K equation}), (\ref{Pi equation}), (\ref{S equation}), (\ref{M equation}) admits unique solution in $C\left([0,T],\mathbb{R}^{n\times n}\right)$ and the equations (\ref{varphi equation}), (\ref{psi equation}) admits unique solution in $L^2_{\mathcal{F}_t}\left([0,T],\mathbb{R}^{n}\right)$. In addition, the equation (\ref{P_N equation}), (\ref{K_N equation}), (\ref{Pi_N equation}), (\ref{S_N equation}), (\ref{M_N equation}) have asymptotic solvability and the following estimations:
	\begin{equation}\label{estimation of P,K}
		\sup_{t\in[0,T]}\Big(\left|P_N(t)-P(t)\right|+\left|K_N(t)-K(t)\right|\Big)=O\left(\frac{1}{N}\right),
	\end{equation}
	\begin{equation}\label{estimation of Pi,S,M}
		\sup_{t\in[0,T]}\Big(\left|N\Pi_N(t)-\Pi(t)\right|+\left|NS_N(t)-0\right|+\left|NM_N(t)-M(t)\right|\Big)=O\left(\frac{1}{N}\right),
	\end{equation}
	\begin{equation}\label{estimation of varphi, beta}
		\sup_{t\in[0,T]}\mathbb{E}\left|\varphi_N(t)-\varphi(t)\right|^2+\sup_{t\in[0,T]}\mathbb{E}\left|\beta_N(t)-\beta(t)\right|^2=O\left(\frac{1}{N^2}\right),
	\end{equation}
	\begin{equation}\label{estimation of varphi, zeta}
		\sup_{t\in[0,T]}\mathbb{E}\left|N\psi_N(t)-\psi(t)\right|^2+\sup_{t\in[0,T]}\mathbb{E}\left|N\zeta_N(t)-\zeta(t)\right|^2=O\left(\frac{1}{N^2}\right).
	\end{equation}
\end{mythm}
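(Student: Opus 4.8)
The plan is to solve the limiting equations first, exploiting that they form an (almost) triangular cascade, then to obtain the asymptotic solvability of the $N$-indexed system together with all convergence rates in a single bootstrap/continuation argument, and finally to transfer the conclusion to the inhomogeneous terms by standard linear BSDE stability. For the limiting equations I would proceed in the order dictated by their dependence. Equation (\ref{P equation}) is a standard symmetric Riccati equation whose effective control weight $\mathcal R=R+D^\top PD+\widetilde D^\top P\widetilde D$ is positive by Assumption \ref{A4}; being the Riccati equation of a uniformly convex LQ problem (the formal $N\to\infty$ limit, cf.\ Assumption \ref{A2}), it admits a unique global solution $P\in C([0,T];\mathcal S^n)$ via the classical correspondence between uniform convexity and Riccati solvability (cf.\ Sun and Yong \cite{Sun-Yong-2020}). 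Given $P$, hence $\mathcal R$ and $\widetilde P$, equation (\ref{S equation}) is \emph{linear and homogeneous} in $S$ with $S(T)=0$, so $S\equiv 0$; equation (\ref{K equation}) has a unique solution in $C^1([0,T];\mathbb R^{n\times n})$ by Assumption \ref{A5}; and (\ref{Pi equation}), then (\ref{M equation}), are \emph{linear} matrix ODEs in $\Pi$, resp.\ $M$, with bounded coefficients (those of (\ref{M equation}) built from the already-known $P,K,\Pi$, the relevant inverses being bounded by Assumption \ref{A4}), hence globally and uniquely solvable on $[0,T]$. Finally (\ref{varphi equation}) and (\ref{psi equation}) are linear BSDEs driven by $W_0$ with $L^2_{\mathcal F^0_t}$ data, and since $\varphi$ does not depend on $\psi$, the classical theory gives unique solutions in $L^2_{\mathcal F_t}(0,T;\mathbb R^n)\times L^2_{\mathcal F_t}(0,T;\mathbb R^n)$.

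For the $N$-system I would first rescale. Since $\Pi_N(T),S_N(T),M_N(T),\psi_N(T),\zeta_N(T)$ each carry a factor $1/N$, put $\widehat\Pi_N:=N\Pi_N$, $\widehat S_N:=NS_N$, $\widehat M_N:=NM_N$, $\widehat\psi_N:=N\psi_N$, $\widehat\zeta_N:=N\zeta_N$; then the terminal data of the rescaled system coincide exactly with those of (\ref{P equation})--(\ref{psi equation}). Multiplying (\ref{Pi_N equation}), (\ref{S_N equation}), (\ref{M_N equation}), (\ref{psi_N equation}) by $N$ and going through the equations term by term, one checks the key defect estimate: the right-hand side of the rescaled $(P_N,K_N,\widehat\Pi_N,\widehat S_N,\widehat M_N)$-system differs from that of the $(P,K,\Pi,S,M)$-system, evaluated at the same argument, by a quantity that is $O(1/N)$ uniformly on bounded sets. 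Indeed, the explicit prefactors $(I_n-\Gamma_1/N)$, $(I_n-\Gamma_0/N)$, $1/N$, $1/N^2$, $(N-1)/N$ are $O(1/N)$ perturbations of their limits; every product of a small matrix $\Pi_N=\widehat\Pi_N/N$, $S_N=\widehat S_N/N$ or $M_N=\widehat M_N/N$ against a bounded matrix is $O(1/N)$; and $\mathcal R_N,\widetilde P_N,\widetilde K_N$ differ from $\mathcal R,\widetilde P,\widetilde K$ by $O(|P_N-P|+|K_N-K|+1/N)$, where --- by Assumption \ref{A4} and continuity --- the inverses $\mathcal R_N^{-1}$ and $(\mathcal R_N+\widetilde D^\top K_N\widetilde D)^{-1}$ remain uniformly bounded as soon as $P_N,K_N$ lie within $O(1/N)$ of $P,K$ (Assumption \ref{A3} ensuring their positivity at finite $N$).

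Next I would run a continuation argument for the ODE part. Fix a large constant $C_0$ and let $[\tau_N,T]$ be the maximal subinterval on which the rescaled $N$-system exists and the difference vector $\Delta_N:=(P_N-P,\,K_N-K,\,\widehat\Pi_N-\Pi,\,\widehat S_N,\,\widehat M_N-M)$ satisfies $\sup|\Delta_N|\le C_0/N$ there. On $[\tau_N,T]$, $\Delta_N$ solves a coupled \emph{linear} ODE $\dot\Delta_N=L_N(t)\Delta_N+r_N(t)$ with $L_N$ bounded uniformly in $N$ (by the defect estimate together with the uniform bounds on the limiting solutions and on the inverses), with $|r_N|\le C/N$ (the defect, all of whose contributions are $O(1/N)$ since every off-cascade coupling in the $N$-system carries a factor $1/N$), and with $|\Delta_N(T)|=O(1/N)$. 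Gr\"onwall's inequality then gives $\sup_{[\tau_N,T]}|\Delta_N|\le C_1/N$ with $C_1$ depending only on $T$ and the uniform bounds, \emph{not} on $C_0$; choosing $C_0>C_1$ and $N$ large, a continuity argument excludes $\tau_N>0$ (the solution would extend past $\tau_N$ while still satisfying the strict bound, contradicting maximality), so $\tau_N=0$. Hence, for $N$ large, the system (\ref{P_N equation})--(\ref{M_N equation}) is solvable on $[0,T]$ and the estimates (\ref{estimation of P,K}), (\ref{estimation of Pi,S,M}) hold.

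Finally, with $P_N,K_N,\widehat\Pi_N,\widehat S_N,\widehat M_N$ now uniformly bounded, the BSDEs for $(\varphi_N,\beta_N)$ coming from (\ref{varphi_N equation}) and for $(\widehat\psi_N,\widehat\zeta_N)$ coming from $N$ times (\ref{psi_N equation}) have uniformly bounded generator coefficients and $L^2$ data, so a first application of the standard a priori $L^2$-estimate for linear BSDEs bounds $(\varphi_N,\beta_N,\widehat\psi_N,\widehat\zeta_N)$ uniformly in $N$. Subtracting (\ref{varphi equation}), resp.\ (\ref{psi equation}), the differences $\varphi_N-\varphi$ and $\widehat\psi_N-\psi$ solve linear BSDEs, weakly $O(1/N)$-coupled to one another, with terminal values of order $1/N$ in $L^2(\Omega)$ and with inhomogeneities bounded by $C/N$ times square-integrable quantities ($|\varphi|,|\psi|,|f|,|g|,|\widetilde g|,|\eta_1|,|\beta|$ and the uniformly bounded $N$-solutions); the standard $L^2$-estimate for linear BSDEs then yields $\sup_{t}\mathbb E|\varphi_N-\varphi|^2+\mathbb E\int_0^T|\beta_N-\beta|^2\,dt=O(1/N^2)$ together with the analogue for $(\widehat\psi_N,\widehat\zeta_N)$, which gives (\ref{estimation of varphi, beta}) and (\ref{estimation of varphi, zeta}). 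I expect the main obstacle to be the term-by-term verification in the rescaling step that the defect is genuinely $O(1/N)$ uniformly on the a priori region --- and, relatedly, that $\mathcal R_N^{-1}$ and $(\mathcal R_N+\widetilde D^\top K_N\widetilde D)^{-1}$ stay uniformly bounded there --- which is lengthy bookkeeping through the explicit Riccati and Lyapunov equations; once that is in place, the continuation and linear-BSDE-stability steps are routine.
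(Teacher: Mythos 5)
Your proposal is correct and follows essentially the same route as the paper: solve the limiting equations in cascade order ($P$ by standard Riccati theory, $S\equiv 0$ by linearity and homogeneity, $K$ by Assumption \ref{A5}, then the linear ODEs for $\Pi$, $M$ and the linear BSDEs for $\varphi$, $\psi$), then obtain the $N$-system estimates by perturbation and the inhomogeneous estimates by linear BSDE stability. The only difference is that where the paper simply cites continuous dependence of ODE solutions on parameters (Theorem 4 of Huang--Zhou) for the asymptotic solvability and the $O(1/N)$ rates, you open that black box and carry out the rescaling, defect estimate, and Gr\"onwall/continuation argument explicitly, which is a more self-contained rendering of the same idea.
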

\begin{proof}
	Noting that equation (\ref{P equation}) is a standard Riccati equation, we can employ Lemma 3.1 from \cite{Li-Nie-Wang-Yan-2024} to know that it admits a unique solution in $C^1\left([0,T];\mathbb{R}^{n\times n}\right)$. Moreover, we observe that equations (\ref{Pi equation})-(\ref{M equation}) are all linear {\it ordinary differential equations} (ODEs). Therefore, under the boundedness condition of Assumption \ref{A1} and Assumption \ref{A5}, equations (\ref{Pi equation})-(\ref{M equation}) admit unique solutions. In particular, $S(\cdot)=0$ is the unique solution of equation (\ref{S equation}). Since both (\ref{varphi equation}) and (\ref{psi equation}) are linear BSDEs, under Assumption \ref{A1}, they both admit unique solutions. Using the continuous dependence of the solutions to ODEs on parameters (see Theorem 4 in \cite{Huang-Zhou-2020}), we have the estimation (\ref{estimation of P,K}), (\ref{estimation of Pi,S,M}). Using the standard estimates for BSDE (see Theorem 7.2.2 in \cite{Yong-Zhou-1999}), we have the estimations (\ref{estimation of varphi, beta}) and (\ref{estimation of varphi, zeta}). Then, the proof is completed.
\end{proof}

Here, we provide two results regarding the solvability of equation (\ref{K equation}). First, let $N(\cdot):=P(\cdot)+K(\cdot)$, we have the following equation
\begin{equation}\label{N equation}
\left\{\begin{aligned}
	& \dot{N}+N(A+E)+A^\top N+C^\top P C+\widetilde{C}^\top N \widetilde{C}+C^\top P F+\widetilde{C}^\top N \widetilde{F} \\
	& +Q-Q \Gamma_1-\left[N B+\widetilde{C}^\top N \widetilde{D}+C^\top P D\right]\left(R+D^\top P D+\widetilde{D}^\top N \widetilde{D}\right)^{-1} \\
	& \times\left[B^\top N+D^\top P(C+F)+\widetilde{D}^\top N\left(\widetilde{C}^\top+\widetilde{F}\right)\right]=0, \\
	& N (T)=G-G \Gamma_0.
\end{aligned}\right.
\end{equation}
And let $\Psi(\cdot)$ be the fundamental solution matrix of the following linear ODE:
$$
\begin{aligned}
&\begin{pmatrix}
	\dot{u} \\
	\dot{v}
\end{pmatrix}
=H\begin{pmatrix}
	u \\
	v
\end{pmatrix}dt,\quad
\begin{pmatrix}
	u \\
	v
\end{pmatrix}(T)=\begin{pmatrix}
I_n \\
I_n
\end{pmatrix},
\end{aligned}
$$
where $H:=\begin{pmatrix}
	a_1 & a_2  \\
	a_3 & a_4
\end{pmatrix}$, with
$$
\begin{aligned}
	&a_1:=A+E-B R^{-1} D^\top P(C+F),\\
	&a_2:= -B\left(R+D^\top P D\right)^{-1} B^\top,\\
	&a_3:=-C^\top P(C+F)-\widetilde{C}^\top P \widetilde{C}+C^\top P D R^{-1} D^\top P(C+F)-Q+Q\Gamma_1,\\
	&a_4:=-A^\top-\delta_1\delta_2I_n+C^\top P D
	\left(R+D^\top P D\right)^{-1} B^\top-\delta_2^2I_n,
\end{aligned}
$$
where constants $\delta_i>0,i=1,2$. We first have the following result.
\begin{mypro}
	If $\widetilde{D}(\cdot)\equiv0$, $\widetilde{F}(\cdot)=\delta_1I_n$, $\widetilde{C}(\cdot)=\delta_2I_n$, and the matrix $
	\begin{pmatrix}
		I_n & 0
	\end{pmatrix} \Psi(t) \Psi^{-1}(T)\\\begin{pmatrix}
		I_n \\
		G-G \Gamma_0
	\end{pmatrix}
	$ is non-singular, then the Assumption \ref{A4} holds.
\end{mypro}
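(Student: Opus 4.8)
The plan is to read Assumption \ref{A4} as elementary under the stated hypotheses and to put the real work into the companion equation for $N(\cdot):=P(\cdot)+K(\cdot)$, for which $\Psi$ was introduced. Since $\widetilde{D}(\cdot)\equiv 0$, both inequalities in Assumption \ref{A4} collapse to $R+D^\top P D>0$. As $Q(\cdot)\ge 0$, $G\ge 0$, $R(\cdot)>0$, equation (\ref{P equation}) is the Riccati equation of a nondegenerate LQ problem, so by the classical theory — the same Lemma~3.1 of \cite{Li-Nie-Wang-Yan-2024} already used for the preceding theorem — it has a unique solution $P\in C^1([0,T];\mathcal{S}^n)$ with $P(t)\ge 0$; hence $R+D^\top P D\ge R>0$, and Assumption \ref{A4} holds. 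Since the proposition announces the first of two results on the solvability of (\ref{K equation}), I would then show that the non-singularity hypothesis is exactly what makes (\ref{K equation}) well posed, i.e. Assumption \ref{A5}.

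Substituting $\widetilde{D}\equiv 0$, $\widetilde{F}=\delta_1 I_n$, $\widetilde{C}=\delta_2 I_n$ into (\ref{N equation}), cancelling the two copies of $\widetilde{P}^\top\mathcal{R}^{-1}\widetilde{P}$ that appear with opposite signs when (\ref{P equation}) and (\ref{K equation}) are added, and collecting the scalar contributions $\delta_2^2$ and $\delta_1\delta_2$ into the diagonal blocks, one verifies that $N$ solves the (non-symmetric) matrix Riccati equation
$$
\dot N + N a_1 - a_4 N + N a_2 N - a_3 = 0, \qquad N(T)=G-G\Gamma_0,
$$
where $a_1,a_2,a_3,a_4$ are precisely the blocks of $H$. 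This identification — that the control-weight factors occurring inside (\ref{N equation}) reduce to the $R$- and $(R+D^\top P D)$-factors written in the $a_i$'s, and that the $\delta$-terms are distributed between $a_3$ and $a_4$ in the right way — is the main, routine-but-delicate, computation, and I expect it to be the only genuine obstacle in the argument.

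For the last step I would invoke the standard linearization of Riccati equations. Put $\begin{pmatrix} U(t) \\ V(t)\end{pmatrix}:=\Psi(t)\Psi^{-1}(T)\begin{pmatrix} I_n \\ G-G\Gamma_0\end{pmatrix}$, the solution of $\dot{\Xi}=H\Xi$ with terminal value $\begin{pmatrix} I_n \\ G-G\Gamma_0\end{pmatrix}$; by hypothesis $U(t)=\begin{pmatrix} I_n & 0\end{pmatrix}\Psi(t)\Psi^{-1}(T)\begin{pmatrix} I_n \\ G-G\Gamma_0\end{pmatrix}$ is non-singular for every $t\in[0,T]$. Then $N(t):=V(t)U(t)^{-1}$ is well defined and $C^1$ on $[0,T]$, and differentiating (using $\dot U=a_1U+a_2V$, $\dot V=a_3U+a_4V$) shows $N$ solves the displayed Riccati equation; since $U(T)=I_n$ and $V(T)=G-G\Gamma_0$ one has $N(T)=G-G\Gamma_0$, so $N$ solves (\ref{N equation}). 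Setting $K:=N-P\in C^1([0,T];\mathbb{R}^{n\times n})$ and subtracting (\ref{P equation}) from (\ref{N equation}) recovers that $K$ solves (\ref{K equation}) with $K(T)=-G\Gamma_0$. For uniqueness, any $C^1$ solution $K'$ of (\ref{K equation}) makes $N'=P+K'$ a $C^1$ solution of the same Riccati equation; since its right-hand side is locally Lipschitz and $U(\cdot)$ stays invertible, ODE uniqueness gives $N'\equiv V U^{-1}$, hence $K'\equiv K$. Thus Assumption \ref{A5} holds, and it is precisely the invertibility of $U(\cdot)$ on the closed interval that prevents $N$ (equivalently $K$) from blowing up, so no extra a priori estimate is required.
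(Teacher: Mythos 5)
Your proof follows essentially the same route as the paper: specialize equation (\ref{N equation}) for $N=P+K$ to a matrix Riccati equation whose blocks are the $a_i$'s of $H$, and convert the non-singularity hypothesis into global solvability via the linearization $N=VU^{-1}$ (the paper simply cites Reid's criterion where you carry out the Radon-lemma computation explicitly), after which $K=N-P$ solves (\ref{K equation}). Your additional observation is also correct and worth keeping: since $\widetilde{D}\equiv0$ collapses both inequalities of Assumption \ref{A4} to $R+D^{\top}PD>0$, which already follows from $P\ge0$, the substantive content of the proposition is really the well-posedness of (\ref{K equation}), i.e.\ Assumption \ref{A5} --- exactly what the paper's own proof establishes.
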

\begin{proof}
	Under the conditions of the proposition, equation (\ref{N equation}) becomes
	\begin{equation}\label{N_1 equation}
		\left\{\begin{aligned}
			& \dot{N}+N\left[A+E-B\left(R+D^\top P D\right)^{-1} D^\top P(C+F)\right] \\
			& +\left[A^\top+\delta_1\delta_2 I_n+\delta^2_2I_n-C^\top P D\left(R+D^\top P D\right)^{-1} B^\top\right] N \\
			& -N B\left(R+D^\top P D\right)^{-1} B^\top N+C^\top P C+\widetilde{C}^\top P \widetilde{C}+C^\top P F \\
			& +Q-Q \Gamma_1-C^\top P D\left(R+D^\top P D\right)^{-1} D^\top P(C+F) =0,\\
			& N (T)=G-G \Gamma_0.
		\end{aligned}\right.
	\end{equation}
	By the result of Page 11 in Reid \cite{Reid-1972}, we have the necessary and sufficient condition for the wellposedness of (\ref{N_1 equation}) is the matrix $
	\begin{pmatrix}
		I_n & 0
	\end{pmatrix} \Psi(t) \Psi^{-1}(T)\begin{pmatrix}
		I_n \\
		G-G \Gamma_0
	\end{pmatrix}
	$ is non-singular. Then $K=N-P$ is the unique solution to equation (\ref{K equation}). The proof is complete.
\end{proof}
Next, we present another result about Assumption \ref{A4}.
\begin{mypro}
	If $F(\cdot)=\widetilde{F}(\cdot)\equiv0$, $E(\cdot)=\delta_3I_n$, $\Gamma_1(\cdot)=\delta_4I_n$, and $\Gamma_0=\delta_5I_n$, $0 <\delta_3,\delta_4,\delta_5 \leq 1$, then Assumption \ref{A4} holds.
\end{mypro}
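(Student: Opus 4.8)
The plan is to identify equation (\ref{K equation}) --- via the substitution $K=N-P$ --- with the Riccati equation of an auxiliary \emph{standard} stochastic LQ problem whose weighting data is positive semi-definite, so that its global solvability, and therefore both inequalities of Assumption \ref{A4}, follow from classical results. I would start from the fact that (\ref{P equation}) is the Riccati equation of a standard stochastic LQ problem with $Q(\cdot)\geq0$, $R(\cdot)>0$, $G\geq0$; by Lemma~3.1 of \cite{Li-Nie-Wang-Yan-2024} it admits a unique solution $P(\cdot)\in C^1([0,T];\mathcal{S}^n)$, and moreover $P(\cdot)\geq0$. In particular $\mathcal{R}=R+D^\top PD+\widetilde{D}^\top P\widetilde{D}\geq R>0$, which is already the first inequality of Assumption \ref{A4}; the remaining content is thus the unique solvability of (\ref{K equation}) (i.e.\ Assumption \ref{A5}) together with the second inequality $R+D^\top PD+\widetilde{D}^\top(P+K)\widetilde{D}>0$.

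Next I would work with $N(\cdot):=P(\cdot)+K(\cdot)$, which satisfies (\ref{N equation}). Substituting $F=\widetilde{F}\equiv0$, $E=\delta_3 I_n$, $\Gamma_1=\delta_4 I_n$, $\Gamma_0=\delta_5 I_n$ and setting $\widehat{A}:=A+\tfrac{\delta_3}{2}I_n$, $\widehat{Q}:=C^\top PC+(1-\delta_4)Q$, $\widehat{S}:=D^\top PC$, $\widehat{R}:=R+D^\top PD$, equation (\ref{N equation}) collapses to
\begin{equation*}
\left\{\begin{aligned}
&\dot N+N\widehat{A}+\widehat{A}^\top N+\widetilde{C}^\top N\widetilde{C}+\widehat{Q}-\bigl(NB+\widetilde{C}^\top N\widetilde{D}+\widehat{S}^\top\bigr)\bigl(\widehat{R}+\widetilde{D}^\top N\widetilde{D}\bigr)^{-1}\bigl(B^\top N+\widetilde{D}^\top N\widetilde{C}+\widehat{S}\bigr)=0,\\
&N(T)=(1-\delta_5)G,
\end{aligned}\right.
\end{equation*}
which is exactly the Riccati equation of the stochastic LQ problem of minimizing $\mathbb{E}\bigl[\int_0^T(\langle\widehat{Q}X,X\rangle+2\langle\widehat{S}X,v\rangle+\langle\widehat{R}v,v\rangle)\,dt+\langle(1-\delta_5)GX(T),X(T)\rangle\bigr]$ subject to $dX=(\widehat{A}X+Bv)\,dt+(\widetilde{C}X+\widetilde{D}v)\,dW$ with a one-dimensional Brownian motion $W$.

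I would then check admissibility of this auxiliary data: since $0<\delta_4,\delta_5\leq1$ and $P\geq0$,
\begin{equation*}
\begin{pmatrix}\widehat{Q}&\widehat{S}^\top\\\widehat{S}&\widehat{R}\end{pmatrix}=\begin{pmatrix}C^\top\\D^\top\end{pmatrix}P\begin{pmatrix}C&D\end{pmatrix}+\begin{pmatrix}(1-\delta_4)Q&0\\0&R\end{pmatrix}\geq0,\qquad \widehat{R}\geq R\gg0,\qquad (1-\delta_5)G\geq0.
\end{equation*}
Hence by the classical global-solvability theory for Riccati equations of stochastic LQ problems with positive semi-definite weighting triple and uniformly positive control weight (see \cite{Sun-Yong-2020}; cf.\ also Chapter~6 of \cite{Yong-Zhou-1999}), the displayed equation has a unique solution $N(\cdot)\in C([0,T];\mathcal{S}^n)$ with $N(\cdot)\geq0$; in particular $\widehat{R}+\widetilde{D}^\top N\widetilde{D}\geq\widehat{R}\gg0$ on all of $[0,T]$, so the inverse appearing in the equation is always well defined and there is no finite-time blow-up. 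Setting $K:=N-P$ then produces the unique solution of (\ref{K equation}), which is Assumption \ref{A5}, and moreover $R+D^\top PD+\widetilde{D}^\top(P+K)\widetilde{D}=\widehat{R}+\widetilde{D}^\top N\widetilde{D}\gg0$, giving the second inequality of Assumption \ref{A4}.

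The main obstacle I anticipate lies entirely in the reduction step: one has to verify that after the substitution every term of (\ref{N equation}) fits the Riccati pattern of a \emph{single-noise} LQ problem --- the key point being that $C(\cdot)$ and $D(\cdot)$ enter only through the \emph{fixed} blocks $C^\top PC$, $C^\top PD$, $D^\top PD$, which are absorbed into $\widehat{Q},\widehat{S},\widehat{R}$, and not as $N$-dependent diffusion terms --- and that the resulting weighting triple is genuinely positive semi-definite; both facts rest on having $P\geq0$ in hand. Once this bookkeeping is settled, invoking the standard Riccati solvability result is routine, and the two inequalities of Assumption \ref{A4} follow immediately from $N\geq0$.
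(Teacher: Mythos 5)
Your proposal is correct and follows essentially the same route as the paper: substitute the hypotheses into equation (\ref{N equation}), recognize the result as a standard Riccati equation with positive semi-definite weights (since $(1-\delta_4)Q\geq0$ and $(1-\delta_5)G\geq0$), invoke the classical solvability theory from \cite{Yong-Zhou-1999}/\cite{Sun-Yong-2020}, and recover $K=N-P$. Your write-up is in fact somewhat more complete than the paper's, which stops at the solvability of (\ref{K equation}) without explicitly extracting the two inequalities of Assumption \ref{A4} from $P\geq0$ and $N\geq0$ as you do.
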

\begin{proof}
	Under the conditions of the proposition, equation (\ref{N equation}) becomes
	\begin{equation}\label{N_2 equation}
		\left\{\begin{aligned}
			&\dot{N}+N A+A^\top N+\delta_3 N+C^\top P C+\widetilde{C}^\top N \widetilde{C}+Q-\delta_4 Q \\
			&-\left(N B+\widetilde{C}^\top N \widetilde{D} +C^\top P D\right)\left(R+D^\top P D+\widetilde{D}^\top N \widetilde{D}\right)^{-1}\left[B^\top N+D^\top P C+\widetilde{D}^\top N \widetilde{C}\right] = 0,\\
			&N(T)=G-\delta_5 G.
		\end{aligned}\right.
	\end{equation}
	Noting $Q-\delta_4 Q\geq0$ and $G-\delta_5 G\geq0$, we find that equation (\ref{N_2 equation}) is a standard Riccati equation. By using Theorem 7.2, Chapter 6 of \cite{Yong-Zhou-1999}, we obtain the existence and uniqueness of the solution to equation (\ref{N_2 equation}). Then $K=N-P$ is the unique solution to equation (\ref{K equation}).
\end{proof}

Next, we construct open-loop decentralized Nash equilibrium of Problem \ref{problem decentralized}.

\begin{mythm}\label{theorem feedback representation of open loop decentralized}
	Let Assumptions \ref{A1}, \ref{A2}, \ref{A3}, \ref{A4}, \ref{A5} hold, then the decentralized open-loop Nash equilibrium strategy of Problem \ref{problem decentralized} is
	\begin{equation}\label{optimal feedback of open loop condition of decentralized}
		\bar{u}_i^*=-\mathcal{R}^{-1} \widetilde{P} \bar{x}_i^*+\left[\mathcal{R}^{-1} \widetilde{P}-\left(\mathcal{R}+\widetilde{D}^\top K \widetilde{D}\right)^{-1}\left(\widetilde{P}+\widetilde{K}\right)\right] \bar{x}^*-\left(\mathcal{R}+\widetilde{D}^\top K \widetilde{D}\right)^{-1} \Phi,
	\end{equation}
	where $\bar{x}^*(\cdot)\in L^2_{\mathcal{F}^0_t}\left([0,T];\mathbb{R}^n\right)$ is the solution to
	\begin{equation}\label{bar{x}^* equation}
		\left\{\begin{aligned}
			d \bar{x}^*=&\left\{\left[A+E-\left(\mathcal{R}+\widetilde{D}^\top K \widetilde{D}\right)^{-1}(\widetilde{P}+\widetilde{K})\right] \bar{x}^*+f-B\left(\mathcal{R}+\widetilde{D}^\top K \widetilde{D}\right)^{-1} \Phi\right\} d t \\
			&+\left\{\left[\widetilde{C}+\widetilde{F}-\left(\mathcal{R}+\widetilde{D}^\top K \widetilde{D}\right)^{-1}(\widetilde{P}+\widetilde{K})\right] \bar{x}^*+\widetilde{g}-\widetilde{D}\left(\mathcal{R}+\widetilde{D}^\top K \widetilde{D}\right)^{-1} \Phi\right\} d W_0,\\
			\bar{x}^*(0)&=\bar{\xi},
		\end{aligned}\right.
	\end{equation}
	and $\bar{x}_i^*(\cdot)\in L^2_{\mathcal{F}^i_t}\left([0,T];\mathbb{R}^n\right)$ is the solution to
	\begin{equation}\label{bar{x}_i^* equation}
		\left\{\begin{aligned}
			d \bar{x}_i^*=&\left\{\left(A-B \mathcal{R}^{-1} \widetilde{P}\right) \bar{x}_i^*+\left[E+B\mathcal{R}^{-1} \widetilde{P}-B\left(\mathcal{R}+\widetilde{D}^\top K \widetilde{D}\right)^{-1}\left(\widetilde{P}+\widetilde{K}\right)\right] \bar{x}^*\right. \\
			&\quad +f-B\left(\mathcal{R}+\widetilde{D}^\top K \widetilde{D}\right)^{-1} \Phi\bigg\} d t \\
			&+\left\{\left(C-D \mathcal{R}^{-1} \widetilde{P}\right) \bar{x}_i^*+\left[F+D\mathcal{R}^{-1} \widetilde{P}-D\left(\mathcal{R}+\widetilde{D}^\top K \widetilde{D}\right)^{-1}\left(\widetilde{P}+\widetilde{K}\right)\right] \bar{x}^*\right. \\
			&\quad +g-D\left(\mathcal{R}+\widetilde{D}^\top K \widetilde{D}\right)^{-1} \Phi\bigg\} d W_i \\
			&+\left\{\left(\widetilde{C}-\widetilde{D} \mathcal{R}^{-1} \widetilde{P}\right) \bar{x}_i^*+\left[\widetilde{F}+\widetilde{D}\mathcal{R}^{-1} \widetilde{P}-\widetilde{D}\left(\mathcal{R}+\widetilde{D}^\top K \widetilde{D}\right)^{-1}\left(\widetilde{P}+\widetilde{K}\right)\right] \bar{x}^*\right. \\
			&\quad +\widetilde{g}-\widetilde{D}\left(\mathcal{R}+\widetilde{D}^\top K \widetilde{D}\right)^{-1} \Phi\bigg\} d W_0, \\
			\bar{x}_i^*(0)&=\xi_i.
		\end{aligned}\right.
	\end{equation}
	And the optimal state $\hat{x}^*_i(\cdot)\in L^2_{\mathcal{F}_t}\left([0,T];\mathbb{R}^n\right)$ corresponding to decentralized open-loop Nash equilibrium strategy is the solution to
	\begin{equation}\label{hat{x}_i^* equation}
		\left\{\begin{aligned}
			d \hat{x}_i^*=	&\left\{A\hat{x}_i^*-B \mathcal{R}^{-1} \widetilde{P}\bar{x}_i^*+\left[B\mathcal{R}^{-1} \widetilde{P}-B\left(\mathcal{R}+\widetilde{D}^\top K \widetilde{D}\right)^{-1}\left(\widetilde{P}+\widetilde{K}\right)\right] \bar{x}^*\right. \\
			&\quad -B\left(\mathcal{R}+\widetilde{D}^\top K \widetilde{D}\right)^{-1} \Phi+E\hat{x}^{*(N)}+f\bigg\} d t \\
			&+\left\{C\hat{x}_i^*-D \mathcal{R}^{-1} \widetilde{P} \bar{x}_i^*+\left[D\mathcal{R}^{-1} \widetilde{P}-D\left(\mathcal{R}+\widetilde{D}^\top K \widetilde{D}\right)^{-1}\left(\widetilde{P}+\widetilde{K}\right)\right] \bar{x}^*\right. \\
			&\quad -D\left(\mathcal{R}+\widetilde{D}^\top K \widetilde{D}\right)^{-1} \Phi+F\hat{x}^{*(N)}+g\bigg\} d W_i \\
			&+\left\{\widetilde{C}\hat{x}_i^*-\widetilde{D} \mathcal{R}^{-1} \widetilde{P} \bar{x}_i^*+\left[\widetilde{D}\mathcal{R}^{-1} \widetilde{P}-\widetilde{D}\left(\mathcal{R}+\widetilde{D}^\top K \widetilde{D}\right)^{-1}\left(\widetilde{P}+\widetilde{K}\right)\right] \bar{x}^*\right. \\
			&\quad -\widetilde{D}\left(\mathcal{R}+\widetilde{D}^\top K \widetilde{D}\right)^{-1} \Phi+\widetilde{F}\hat{x}^{*(N)}+\widetilde{g}\bigg\} d W_0, \\
			\hat{x}_i^*(0)&=\xi_i,
		\end{aligned}\right.
	\end{equation}
	the optimal state average term $\hat{x}^{*(N)}(\cdot)\in L^2_{\mathcal{F}_t}\left([0,T];\mathbb{R}^n\right)$ corresponding to decentralized open-loop Nash equilibrium strategy is the solution to
	\begin{equation}\label{hat{x}^{*(N)} equation}
		\left\{\begin{aligned}
			d \hat{x}^{*(N)}=&\left\{\left(A+E\right)\hat{x}^{*(N)}-B \mathcal{R}^{-1} \widetilde{P} \bar{x}^{*(N)}+\left[B\mathcal{R}^{-1} \widetilde{P}-B\left(\mathcal{R}+\widetilde{D}^\top K \widetilde{D}\right)^{-1}\right.\right. \\
			&\quad \times\left.\left(\widetilde{P}+\widetilde{K}\right)\right] \bar{x}^*-B\left(\mathcal{R}+\widetilde{D}^\top K \widetilde{D}\right)^{-1} \Phi+f\bigg\} d t \\
			&+\frac{1}{N}\sum_{j=1}^{N}\left\{C\hat{x}_j^*-D \mathcal{R}^{-1} \widetilde{P} \bar{x}_j^*+\left[D\mathcal{R}^{-1} \widetilde{P}-D\left(\mathcal{R}+\widetilde{D}^\top K \widetilde{D}\right)^{-1}\right.\right. \\
			&\quad \times\left.\left(\widetilde{P}+\widetilde{K}\right)\right] \bar{x}^*-D\left(\mathcal{R}+\widetilde{D}^\top K \widetilde{D}\right)^{-1} \Phi+F\hat{x}^{*(N)}+g\bigg\} d W_j\\
			&+\left\{\left(\widetilde{C}+\widetilde{F}\right)\hat{x}^{*(N)}-\widetilde{D} \mathcal{R}^{-1} \widetilde{P} \bar{x}^{*(N)}+\left[\widetilde{D}\mathcal{R}^{-1} \widetilde{P}-\widetilde{D}\left(\mathcal{R}+\widetilde{D}^\top K \widetilde{D}\right)^{-1}\right.\right. \\
			&\quad \times\left.\left(\widetilde{P}+\widetilde{K}\right)\right] \bar{x}^*-\widetilde{D}\left(\mathcal{R}+\widetilde{D}^\top K \widetilde{D}\right)^{-1} \Phi+\widetilde{g}\bigg\} d W_0, \\
			 \hat{x}^{*(N)}(0)&=\frac{1}{N}\sum_{j=1}^{N}\xi_j.
		\end{aligned}\right.
	\end{equation}
\end{mythm}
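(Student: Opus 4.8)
The plan is to obtain the decentralized feedback (\ref{optimal feedback of open loop condition of decentralized}) as the $N\to\infty$ limit of the centralized feedback (\ref{optimal feedback of open loop condition of centralized}), and then to verify that every process appearing in the statement is well-defined, solves its asserted SDE, and carries the claimed adaptedness. The engine for the limit is the asymptotic solvability proved in the preceding theorem: the estimates (\ref{estimation of P,K})--(\ref{estimation of varphi, zeta}) give $\mathcal{R}_N\to\mathcal{R}$, $\widetilde{P}_N\to\widetilde{P}$, $\widetilde{K}_N\to\widetilde{K}$ and $\Phi_N\to\Phi$ uniformly on $[0,T]$ (in the appropriate $L^2$ sense for the inhomogeneous terms), so that every $N$-indexed coefficient block in (\ref{optimal feedback of open loop condition of centralized}), (\ref{x^*_i equation}) and (\ref{x^{*(N)} equation}) converges to the corresponding block built from $\mathcal{R},\widetilde{P},\widetilde{K},\Phi$. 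The decentralized strategy is then read off by replacing these blocks together with the substitutions $x_i^*\to\bar{x}_i^*$ and $x^{*(N)}\to\bar{x}^*$.

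First I would derive the limit equation (\ref{bar{x}^* equation}). Starting from the centralized average dynamics (\ref{x^{*(N)} equation}), I substitute the limiting coefficients and inspect the idiosyncratic diffusion $\frac1N\sum_{j=1}^N[\,\cdot\,]\,dW_j$: since the $W_j$ are mutually independent, this is a sum of $N$ orthogonal martingale increments each of order $1/N$, hence $O(1/\sqrt N)$ in $L^2$, and it drops out in the limit, leaving only the drift in $x^{*(N)}$ and the common-noise diffusion in $dW_0$. Together with $x^{(N)}(0)=\frac1N\sum_j\xi_j\to\bar{\xi}:=\mathbb{E}\xi_1$ (Assumption \ref{A1}(iii)), this produces (\ref{bar{x}^* equation}). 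Because the surviving coefficients are bounded deterministic matrices and the remaining data $f,g,\widetilde g,\varphi,\beta,\eta_1,\eta_0$ are all $\mathcal{F}^0_t$-adapted, (\ref{bar{x}^* equation}) is driven solely by $W_0$ with $\mathcal{F}^0_t$-adapted inputs, so its unique solution lies in $L^2_{\mathcal{F}^0_t}$. The individual limit state (\ref{bar{x}_i^* equation}) then follows from (\ref{x^*_i equation}) under the same substitution with $x^{*(N)}\to\bar{x}^*$; driven by $(W_i,W_0)$ with coefficients depending only on $\bar{x}^*$ (which is $\mathcal{F}^0_t\subseteq\mathcal{F}^i_t$-adapted) and initial value $\xi_i$, its unique solution lies in $L^2_{\mathcal{F}^i_t}$.

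With $\bar{x}_i^*$ and $\bar{x}^*$ in hand, the strategy (\ref{optimal feedback of open loop condition of decentralized}) is a bounded-coefficient affine functional of an $\mathcal{F}^i_t$-adapted and an $\mathcal{F}^0_t$-adapted process, so $\bar{u}_i^*\in L^2_{\mathcal{F}^i_t}(0,T;\mathbb{R}^k)=\mathcal{U}^d_i$, confirming admissibility. To obtain the realized dynamics I would plug $\bar{u}_i^*$ into the \emph{original} state equation (\ref{state equation}), retaining the genuine empirical interaction $x^{(N)}$, now written $\hat{x}^{*(N)}=\frac1N\sum_j\hat{x}_j^*$: the $A\hat{x}_i^*$ and $E\hat{x}^{*(N)}$ (resp. $F,\widetilde F$) terms come from the true mean-field coupling, while the $\bar{x}_i^*$- and $\bar{x}^*$-terms come from $B\bar{u}_i^*$, yielding exactly (\ref{hat{x}_i^* equation}); averaging over $i$ gives (\ref{hat{x}^{*(N)} equation}). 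All five SDEs are linear with coefficients bounded on $[0,T]$ (by Assumption \ref{A1} and the continuity and boundedness of $P,K,\Pi,S,M,\varphi,\psi$ established in the preceding theorem), so existence, uniqueness and the stated $L^2$-integrability follow from the standard theory of linear SDEs.

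I expect the main obstacle to be the passage $x^{*(N)}\to\bar{x}^*$ in the first step: one must simultaneously handle the convergence of the $N$-dependent coefficient blocks—paying attention to the delicate $1/N$ scalings $N\Pi_N\to\Pi$, $NS_N\to0$, $NM_N\to M$ that feed through $\widetilde{K}_N$ and $\Phi_N$—and the vanishing of the averaged idiosyncratic diffusion. This is exactly where the common-noise structure is essential: conditionally on $\mathcal{F}^0_T$ the idiosyncratic fluctuations of the $x_i^*$ are asymptotically orthogonal and wash out, so only the common-noise-driven closed system (\ref{bar{x}^* equation}) persists, whereas the $\mathcal{F}^0_t$-adapted forcing survives intact; the quantitative rate of this averaging is then precisely what the asymptotic-optimality estimates will exploit.
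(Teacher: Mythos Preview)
Your proposal constructs the candidate strategy and checks well-posedness of the SDEs, but it does not prove the actual assertion of the theorem. The statement says that (\ref{optimal feedback of open loop condition of decentralized}) is ``the decentralized open-loop Nash equilibrium strategy of Problem \ref{problem decentralized}'', and Problem \ref{problem decentralized} is precisely the $\varepsilon$-Nash property
\[
\mathcal{J}_i\big(\bar u^*_i,\bar u^*_{-i}\big)\le \inf_{u_i\in\mathcal{U}^c_i}\mathcal{J}_i\big(u_i,\bar u^*_{-i}\big)+O\!\left(\tfrac{1}{\sqrt N}\right).
\]
Nothing in your plan addresses this inequality. Deriving (\ref{optimal feedback of open loop condition of decentralized})--(\ref{hat{x}^{*(N)} equation}) as formal $N\to\infty$ limits of (\ref{optimal feedback of open loop condition of centralized})--(\ref{x^{*(N)} equation}) only motivates the \emph{definition} of the decentralized strategy; it does not show that when agent $i$ deviates to an arbitrary $u_i\in\mathcal{U}^c_i$ while the others play $\bar u^*_{-i}$, the cost gap is controlled by $O(1/\sqrt N)$. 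Your closing sentence alludes to ``asymptotic-optimality estimates'' but never supplies them.

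The paper's proof is structured quite differently. It first proves the three quantitative estimates
\[
\sup_t\mathbb{E}\big|\bar x^{*(N)}-\bar x^*\big|^2,\quad \sup_t\mathbb{E}\big|\hat x^{*(N)}-\bar x^*\big|^2,\quad \sup_t\mathbb{E}\big|\hat x_i^*-\bar x_i^*\big|^2=O\!\left(\tfrac1N\right),
\]
and then, for a generic deviation $u_i$, expands $\mathcal{J}_i(u_i,\bar u^*_{-i})-\mathcal{J}_i(\bar u^*_i,\bar u^*_{-i})=\tfrac12\widetilde{\mathcal J}_i(\widetilde u_i)+I_i$ with $\widetilde{\mathcal J}_i\ge 0$ by uniform convexity. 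The crux is bounding the cross term $I_i$: the paper introduces auxiliary adjoint processes $\lambda_i=P\bar x_i^*+K\bar x^*+\varphi$ and $\lambda_j^i=\Pi\bar x_i^*+M\bar x^*+\psi$ and applies It\^o's formula to $\langle\widetilde x_i,\lambda_i\rangle+\tfrac1N\sum_j\langle\widetilde x_j,\lambda_j^i\rangle$ so that, after cancellation via the Riccati equations (\ref{P equation})--(\ref{psi equation}), only terms involving $\hat x_i^*-\bar x_i^*$, $\hat x^{*(N)}-\bar x^*$ and an explicit $1/N$ factor survive. This step---the duality/It\^o computation that converts $I_i$ into something controlled by the three estimates---is the missing idea in your proposal, and without it the $\varepsilon$-Nash claim is unproved.
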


\begin{proof}
	First, we need to prove the following estimates:
	\begin{equation}\label{CC1}
		\sup _{0 \leq t \leq T} \mathbb{E}\left[\left|\bar{x}^{*(N)}(t)-\bar{x}^*(t)\right|^2\right] = O\left(\frac{1}{N}\right),
	\end{equation}
	\begin{equation}\label{CC2}
		\sup _{0 \leq t \leq T} \mathbb{E}\left[\left|\hat{x}^{*(N)}(t)-\bar{x}^*(t)\right|^2\right] = O\left(\frac{1}{N}\right),
	\end{equation}
	\begin{equation}\label{CC3}
		\sup _{0 \leq t \leq T} \mathbb{E}\left[\left|\hat{x}_i^{*}(t)-\bar{x}_i^*(t)\right|^2\right] = O\left(\frac{1}{N}\right).
	\end{equation}
	In fact, we can get
	\begin{equation}\label{bar{x}^{*(N)} equation}
		\left\{\begin{aligned}
			d \bar{x}^{*(N)}=	&\left\{\left(A-B \mathcal{R}^{-1} \widetilde{P}\right) \bar{x}^{*(N)}+\left[E+B\mathcal{R}^{-1} \widetilde{P}-B\left(\mathcal{R}+\widetilde{D}^\top K \widetilde{D}\right)^{-1}\right.\right. \\
			&\quad \left.\times\left(\widetilde{P}+\widetilde{K}\right)\right] \bar{x}^*+f-B\left(\mathcal{R}+\widetilde{D}^\top K \widetilde{D}\right)^{-1} \Phi\bigg\} d t \\
			&+\frac{1}{N}\sum_{j=1}^{N}\left\{\left(C-D \mathcal{R}^{-1} \widetilde{P}\right) \bar{x}_j^*+\left[F+D\mathcal{R}^{-1} \widetilde{P}-D\left(\mathcal{R}+\widetilde{D}^\top K \widetilde{D}\right)^{-1}\right.\right. \\
			&\quad \left.\times\left(\widetilde{P}+\widetilde{K}\right)\right] \bar{x}^*+g-D\left(\mathcal{R}+\widetilde{D}^\top K \widetilde{D}\right)^{-1} \Phi\bigg\} d W_j \\
			&+\left\{\left(\widetilde{C}-\widetilde{D} \mathcal{R}^{-1} \widetilde{P}\right) \bar{x}^{*(N)}+\left[\widetilde{F}+\widetilde{D}\mathcal{R}^{-1} \widetilde{P}-\widetilde{D}\left(\mathcal{R}+\widetilde{D}^\top K \widetilde{D}\right)^{-1}\right.\right. \\
			&\quad \left.\times\left(\widetilde{P}+\widetilde{K}\right)\right] \bar{x}^*+\widetilde{g}-\widetilde{D}\left(\mathcal{R}+\widetilde{D}^\top K \widetilde{D}\right)^{-1} \Phi\bigg\} d W_0, \\
			\bar{x}^{*(N)}(0)&=\frac{1}{N}\sum_{j=1}^{N}\xi_j.
		\end{aligned}\right.
	\end{equation}
	From (\ref{bar{x}^* equation}) and (\ref{bar{x}^{*(N)} equation}), we have
	\begin{equation*}
		\left\{\begin{aligned}
			d\left(\bar{x}^{*(N)}-\bar{x}^*\right)  =&\left(A-B \mathcal{R}^{-1}\widetilde{P}\right)\left(\bar{x}^{*(N)}-\bar{x}^*\right) d t
+\frac{1}{N}\sum_{j=1}^{N}\bigg\{\left(C-D \mathcal{R}^{-1} \widetilde{P}\right) \bar{x}_j^*\\
			&\quad \left.+\left[F+D\mathcal{R}^{-1} \widetilde{P}-D\left(\mathcal{R}+\widetilde{D}^\top K \widetilde{D}\right)^{-1}\left(\widetilde{P}+\widetilde{K}\right)\right] \bar{x}^*\right. \\
			&\quad +g-D(\mathcal{R}+\widetilde{D}^\top K \widetilde{D})^{-1} \Phi\bigg\} d W_j +\left(\widetilde{C}-\widetilde{D} \mathcal{R}^{-1} \widetilde{P}\right)\left(\bar{x}^{*(N)}-\bar{x}^*\right) d W_0 ,\\
			\left(\bar{x}^{*(N)}-\bar{x}^*\right)(0) =&\ \frac{1}{N}\sum_{j=1}^{N}\xi_j-\bar{\xi}.
		\end{aligned}\right.
	\end{equation*}
	By using BDG's inequality and noting the boundedness condition of the coefficients and independence between $\xi_i$ in Assumption \ref{A1}, we have ($K$ is some positive constant and may be different line by line)
	\begin{equation*}
		\begin{aligned}
			& \mathbb{E}\left[\left|\bar{x}^{*(N)}(t)-\bar{x}^*(t)\right|^2\right]\\
			&\leq  K\left[\mathbb{E}\left(\frac{1}{N}\sum_{j=1}^{N}\xi_j-\bar{\xi}\right)^2+ \mathbb{E} \int_{0}^{t}\left|\bar{x}^{*(N)}-\bar{x}^*\right|ds+\frac{1}{N^2}\mathbb{E}\int_0^t \sum_{j=1}^N\left|\bar{x}_j^*+\bar{x}^*+1\right|^2 d s\right]\\
			&\leq K\left[\frac{1}{N^2}\sum_{j=1}^{N}\mathbb{E}\left(\xi_j-\bar{\xi}\right)^2+ \mathbb{E} \int_{0}^{t}\left|\bar{x}^{*(N)}-\bar{x}^*\right|ds+\frac{1}{N^2}\mathbb{E}\int_0^t \sum_{j=1}^N\left|\bar{x}_j^*+\bar{x}^*+1\right|^2 d s\right]\\
			&=K\left[ \mathbb{E} \int_{0}^{t}\left|\bar{x}^{*(N)}-\bar{x}^*\right|ds+O\left(\frac{1}{N}\right)\right].
		\end{aligned}
	\end{equation*}
	By applying Gronwall's inequality, (\ref{CC1}) holds. From (\ref{bar{x}^* equation}) and (\ref{hat{x}^{*(N)} equation}), we have
	\begin{equation*}
		\left\{\begin{aligned}
			d\left(\hat{x}^{*(N)}-\bar{x}^*\right)  &=\left[\left(A+E\right)\left(\hat{x}^{*(N)}-\bar{x}^*\right)-B \mathcal{R}^{-1}\widetilde{P}\left(\bar{x}^{*(N)}-\bar{x}^*\right)\right] d t \\
			&\quad +\frac{1}{N}\sum_{j=1}^{N}\left\{C\hat{x}_j^*-D \mathcal{R}^{-1}\widetilde{P} \bar{x}_j^*+\left[D\mathcal{R}^{-1}\widetilde{P}-D\left(\mathcal{R}+\widetilde{D}^\top K \widetilde{D}\right)^{-1}\right.\right. \\
			&\qquad \left.\times\left(\widetilde{P}+\widetilde{K}\right)\right] \bar{x}^*-D(\mathcal{R}+\widetilde{D}^\top K \widetilde{D})^{-1} \Phi+F\hat{x}^{*(N)}+g\bigg\} d W_j\\
			&\quad +\left[\left(\widetilde{C}+\widetilde{F}\right)\left(\hat{x}^{*(N)}-\bar{x}^*\right)-\widetilde{D} \mathcal{R}^{-1}\widetilde{P}\left(\bar{x}^{*(N)}-\bar{x}^*\right)\right] dW_0,\\
			\left(\bar{x}^{*(N)}-\bar{x}^*\right)(0) &=\frac{1}{N}\sum_{j=1}^{N}\xi_j-\bar{\xi}.
		\end{aligned}\right.
	\end{equation*}
	Similarly, we have
	\begin{equation*}
		\begin{aligned}
			& \mathbb{E}\left[\left|\hat{x}^{*(N)}(t)-\bar{x}^*(t)\right|^2\right]\leq  K\Bigg[\mathbb{E}\left(\frac{1}{N}\sum_{j=1}^{N}\xi_j-\bar{\xi}\right)^2 + \mathbb{E} \int_{0}^{t}\left|\hat{x}^{*(N)}-\bar{x}^*\right|ds\\
			&\quad + \mathbb{E} \int_{0}^{t}\left|\bar{x}^{*(N)}-\bar{x}^*\right|ds+\frac{1}{N^2}\mathbb{E}\int_0^t \sum_{j=1}^N\left|\bar{x}_j^*+\bar{x}^*+1\right|^2 d s\Bigg]\\
			&\leq K\left[\frac{1}{N^2}\sum_{j=1}^{N}\mathbb{E}\left|\xi_j-\bar{\xi}\right|^2+ \mathbb{E} \int_{0}^{t}\left|\hat{x}^{*(N)}-\bar{x}^*\right|ds+ \mathbb{E} \int_{0}^{t}\left|\bar{x}^{*(N)}-\bar{x}^*\right|ds\right.\\
			&\qquad \left.+\frac{1}{N^2}\mathbb{E}\int_0^t \sum_{j=1}^N\left|\bar{x}_j^*+\bar{x}^*+1\right|^2 d s\right]\\
			&=K\left[ \mathbb{E} \int_{0}^{t}\left|\bar{x}^{*(N)}-\bar{x}^*\right|ds+ \mathbb{E} \int_{0}^{t}\left|\bar{x}^{*(N)}-\bar{x}^*\right|ds+O\left(\frac{1}{N}\right)\right].
		\end{aligned}
	\end{equation*}
	By applying the estimations (\ref{CC1}) and Gronwall's inequality, (\ref{CC2}) holds.
	
	Similarly, using the estimation (\ref{CC1}) and (\ref{CC2}), we have the estimation (\ref{CC3}).
	
	Next, we prove the asymptotic optimality of the decentralized strategies, i.e., for any $i=1,\cdots,N$, we have
	\begin{equation*}
		\mathcal{J}_i\left(u^*_i(\cdot), u^*_{-i}(\cdot\right)\leq\inf _{u_i(\cdot) \in\, \mathcal{U}_i^{c}} \mathcal{J}_i\left(u_i(\cdot), u^*_{-i}(\cdot)\right)+O\left(\frac{1}{N}\right).
	\end{equation*}
	For given $i$, we only need to consider $u_i(\cdot)$ satisfies $\mathcal{J}_i\left(u_i(\cdot), u^*_{-i}(\cdot)\right) \leq \mathcal{J}_i\left(u^*_i(\cdot), u^*_{-i}(\cdot)\right)$. And then, using Assumption \ref{A2}, there exists $\delta_0>0$ such that
	$$
	\delta_0 \mathbb{E} \int_0^T  u_i^2(t) d t \leq \mathcal{J}_i\left(u_i(\cdot), u^*_{-i}(\cdot)\right) \leq \mathcal{J}_i\left(u^*_i(\cdot), u^*_{-i}(\cdot)\right),
	$$
	i.e.,
	$$
	\mathbb{E} \int_0^T\left|u_i(t)\right|^2 d t \leq K .
	$$
	We denote
	$\widetilde{u}_i(\cdot):=u_i(\cdot)-u^*_i(\cdot) \in \mathcal{U}^c_i$. We define $\left(\widetilde{x}_1(\cdot),\cdots,\widetilde{x}_N(\cdot)\right)$ as the state of all agents corresponding to strategy $\left(u^*_1(\cdot),\cdots, u^*_{i-1}(\cdot),u_i(\cdot),u^*_{i+1}(\cdot),\cdots,u^*_N(\cdot)\right)$. Then, $\widetilde{x}_i(\cdot)$ satisfies
	$$
	\left\{\begin{aligned}
		d\widetilde{x}_i=&\left(A\widetilde{x}_i+B \widetilde{u}_i+E\widetilde{x}^{(N)}\right) d t+  \left(C\widetilde{x}_i+D \widetilde{u}_i+F \widetilde{x}^{(N)}\right) d W_i \\
		&+\left(\widetilde{C}\widetilde{x}_i+\widetilde{D} \widetilde{u}_i+\widetilde{F} \widetilde{x}^{(N)}\right) d W_0, \\
		\widetilde{x}_i(0)=&\ 0,
	\end{aligned}\right.
	$$
	and $\widetilde{x}_j(\cdot)$, $j\neq i$ satisfies
	$$
	\left\{\begin{aligned}
		d\widetilde{x}_j=&\left(A\widetilde{x}_j+E\widetilde{x}^{(N)}\right) d t+  \left(C\widetilde{x}_j+F \widetilde{x}^{(N)}\right) d W_j+\left(\widetilde{C}\widetilde{x}_j+\widetilde{F} \widetilde{x}^{(N)}\right) d W_0, \\
		\widetilde{x}_j(0)=&\ 0,
	\end{aligned}\right.
	$$
	where $\widetilde{x}^{(N)}:=\frac{1}{N}\sum_{j=1}^{N}\widetilde{x}_j$. Thus,
	$$
	\begin{aligned}
		&\mathcal{J}_i\left(u_i(\cdot), u^*_{-i}(\cdot)\right)-\mathcal{J}_i\left(u^*_i(\cdot), u^*_{-i}(\cdot\right)\\
		=&\ \frac{1}{2} \mathbb{E} \int_0^T \left(\left\langle Q\left(\widetilde{x}_i-\Gamma_1 \widetilde{x}^{(N)}\right), \widetilde{x}_i-\Gamma_i \widetilde{x}^{(N)}\right\rangle+\left\langle R \widetilde{u}_i, \widetilde{u}_i\right\rangle\right) d t \\
		& +\frac{1}{2}\mathbb{E}\left\langle G\left(\widetilde{x}_i(T)-\Gamma_0 \widetilde{x}^{(N)}\right), \widetilde{x}_i(T)-\Gamma_0 \widetilde{x}^{(N)}(T)\right\rangle\\
		&+ \mathbb{E} \int_0^T\left(\left\langle Q\left(\widetilde{x}_i-\Gamma_1 \widetilde{x}^{(N)}\right), \hat{x}_i^*-\Gamma_1 \hat{x}^{*(N)}-\eta_1\right\rangle+\left\langle R \widetilde{u}_i, u_i^*-\eta_2\right\rangle\right) d t \\
		&+ \mathbb{E}\left\langle G\left(\widetilde{x}_i(T)-\Gamma_0 \widetilde{x}^{(N)}\right), \hat{x}_i^*(T)-\Gamma_0 \hat{x}^{*(N)}(T)-\eta_0\right\rangle
		:= \frac{1}{2}\widetilde{\mathcal{J}}_i\left(\widetilde{u}_i(\cdot)\right)+ I_i.
	\end{aligned}
	$$
	Due to Assumption \ref{A2}, we have $\widetilde{\mathcal{J}}_i\left(\widetilde{u}_i(\cdot)\right)>0$, then $\mathcal{J}_i\left(u_i(\cdot), u^*_{-i}(\cdot)\right)-\mathcal{J}_i\left(u^*_i(\cdot), u^*_{-i}\left(\cdot\right)\right)>I_i$. Therefore, we only need to prove that $I_i=O\left(\frac{1}{N}\right)$. In fact,
	\begin{equation}\label{I_i 1}
		\begin{aligned}
			I_i=&\ \mathbb{E} \left[\int_0^T\left(\left\langle Q\left(\widetilde{x}_i-\Gamma_1\widetilde{x}^{(N)}\right), \bar{x}_i^*-\Gamma_1 \bar{x}^{*}-\eta_1\right\rangle+\left\langle R \widetilde{u}_i, u_i^*-\eta_2\right\rangle\right.\right. \\
			&\qquad \left.+ \left\langle Q\left(\widetilde{x}_i-\Gamma_1 \widetilde{x}^{(N)}\right), \hat{x}_i^*-\bar{x}_i^*-\Gamma_1 \left(\hat{x}^{*(N)}-\bar{x}^{*}\right)\right\rangle\right) d t\\
			&\qquad +\left\langle G\left(\widetilde{x}_i(T)-\Gamma_0 \widetilde{x}^{(N)}\right), \bar{x}_i^*(T)-\Gamma_0 \bar{x}^{*}(T)-\eta_0\right\rangle\\
			&\qquad +\left\langle G\left(\widetilde{x}_i(T)-\Gamma_0 \widetilde{x}^{(N)}(T)\right), \hat{x}_i^*(T)-\bar{x}_i^*(T)-\Gamma_0 \left(\hat{x}^{*(N)}(T)-\bar{x}^{*}(T)\right)\right\rangle\bigg].
		\end{aligned}
	\end{equation}
	Let
	\begin{equation}\label{lambda i}
		\lambda_i(\cdot):=P(\cdot) \bar{x}_i^*(\cdot)+K(\cdot) \bar{x}^*(\cdot)+\varphi(\cdot),
	\end{equation}
	\begin{equation}\label{lambda j}
		\lambda^i_j(\cdot):=\Pi(\cdot) \bar{x}_i^*(\cdot)+M(\cdot) \bar{x}^*(\cdot)+\psi(\cdot),\quad j=1,\cdots,N,
	\end{equation}
	$$
	\begin{aligned}
		\theta(\cdot)&:=\Pi(\cdot)\left(\widetilde{C}(\cdot) \bar{x}^*_i(\cdot)+\widetilde{D}(\cdot) u_i^*(\cdot)+\widetilde{F}(\cdot)\bar{x}^*(\cdot)+\widetilde{g}(\cdot)\right)\\ 
                     &\qquad +M(\cdot)\left[\left(\widetilde{C}(\cdot)+\widetilde{F}(\cdot)\right) \bar{x}^*(\cdot)+\widetilde{D}(\cdot) u^{*(N)}(\cdot)+\widetilde{g}(\cdot)\right]+\zeta(\cdot).
	\end{aligned}
	$$
	Applying It\^o's formula to $\left\langle \widetilde{x}_i(\cdot), \lambda_i(\cdot)\right\rangle+\frac{1}{N}\sum_{j=1}^N\left\langle \widetilde{x}_j(\cdot), \lambda^i_j(\cdot)\right\rangle$, we derive
	\begin{equation}\label{Ito of I_i}
		\begin{aligned}
			& \left\langle G \widetilde{x}_i(T)-G \Gamma_0 \widetilde{x}^{(N)}(T), \bar{x}_i^*(T)-\Gamma_0 \bar{x}^{*}(T)-\eta_0\right\rangle\\
			=&\ \mathbb{E} \int_0^T\bigg[-\left\langle Q\left(\widetilde{x}_i-\Gamma_1 \widetilde{x}^{(N)}\right), \bar{x}_i^*-\Gamma_1 \bar{x}^{*}-\eta_1\right\rangle\\
			&\qquad -\left\langle \widetilde{u}_i, \left[B^\top P+D^\top PC+\widetilde{D}^\top P\widetilde{C}-\left(D^\top PD+\widetilde{D}^\top P\widetilde{D}\right)\mathcal{R}^{-1}\widetilde{P}\right]\bar{x}_i^*\right.\\
			&\qquad +\left[B^\top K+D^\top PF+\widetilde{D}^\top P\widetilde{F}+\widetilde{D}^\top K\left(\widetilde{C}+\widetilde{F}\right)-\left(D^\top PD+\widetilde{D}^\top P\widetilde{D}\right)\mathcal{R}^{-1}\widetilde{P}\right.\\
			&\qquad \left.-\left(D^\top PD+\widetilde{D}^\top \left(P+K\right)\widetilde{D}\right)\left(\mathcal{R}+\widetilde{D}^\top K\widetilde{D}\right)^{-1}\left(\widetilde{P}+\widetilde{K}\right)\right]\bar{x}^*\\
			&\qquad \left.+\left[\Phi-\left(D^\top PD+\widetilde{D}^\top \left(P+K\right)\widetilde{D}\right)\left(\mathcal{R}+\widetilde{D}^\top K\widetilde{D}\right)^{-1}\left(\widetilde{P}+\widetilde{K}\right)\Phi\right]\right\rangle\\
			&\qquad +\frac{1}{N}\left\langle  \tilde{u}_i, B^\top\lambda^i_j+\widetilde{D}^\top\theta \right\rangle\bigg] dt.
		\end{aligned}
	\end{equation}
	Substituting (\ref{Ito of I_i}) into (\ref{I_i 1}), we obtain
	\begin{equation}\label{I_i 2}
		\begin{aligned}
			I_i&= \mathbb{E}\left[ \int_0^T\left(\left\langle Q\left(\widetilde{x}_i-\Gamma_1 \widetilde{x}^{(N)}\right), \hat{x}_i^*-\bar{x}_i^*-\Gamma_1 \left(\hat{x}^{*(N)}-\bar{x}^{*}\right)\right\rangle+\frac{1}{N}\left\langle  \tilde{u}_i, B^\top\lambda^i_j+\widetilde{D}^\top\theta \right\rangle\right) dt\right.\\
			&\qquad + \left\langle G\left(\widetilde{x}_i(T)-\Gamma_0 \widetilde{x}^{(N)}(T)\right), \hat{x}_i^*(T)-\bar{x}_i^*(T)-\Gamma_0 \left(\hat{x}^{*(N)}(T)-\bar{x}^{*}(T)\right)\right\rangle\bigg].
		\end{aligned}
	\end{equation}
	Then, using the estimates (\ref{CC2}) and (\ref{CC3}), we can derive that $I_i=O\left(\frac{1}{N}\right)$. And the proof is completed.
\end{proof}

\subsection{Comparison with the fixed-point method}

For the fixed-point method, we first assume that the state-average $x^{(N)}(\cdot)$ can be approximated by some $\mathcal{F}_t^0$-adapted processes $z(\cdot)$ that will be subsequently determined through a so-called {\it consistency condition} (CC) system. We introduce the following auxiliary state $\check{x}_i(\cdot)\in L^2_{\mathcal{F}_t^i}(0,T;\mathbb{R}^n)$ which satisfies the following linear SDE
\begin{equation}\label{state limiting}
	\left\{\begin{aligned}
		d \check{x}_i=&\left(A \check{x}_i+B u_i +E z+f\right) d t+  \left(C \check{x}_i +D u_i +F z+g\right) d W_i
		\\&+\left(\widetilde{C} \check{x}_i +\widetilde{D} u_i +\widetilde{F} z+\widetilde{g}\right) d W_0, \\
		\check{x}_i (0)=&\ \xi_i,
	\end{aligned}\right.
\end{equation}
and the auxiliary limiting cost functional
\begin{equation}\label{cost functional limiting}
	\begin{aligned}
		J_i\left(u_i(\cdot)\right)=&\ \frac{1}{2}\mathbb{E}\left[ \int_0^T\left(\left\|\check{x}_i(t)-\Gamma_1(t) z(t)-\eta_1(t)\right\|_Q^2+\left\|u_i(t)-\eta_2(t)\right\|_R^2\right) d t\right.\\
		&\qquad +\left\|\check{x}_i(T)-\Gamma_0 z(T)-\eta_{0}\right\|_{G}^2\bigg].
	\end{aligned}
\end{equation}
And the admissible control set is still defined as $\mathcal{U}^d_i$. And we solve the following auxiliary control problem.

\begin{myprob}\label{problem limiting}
	Finding a strategy $u^*_i(\cdot)$ for each agent $\mathcal{A}_i$ satisfying
	\begin{equation*}
		J_i\left(u^*_i(\cdot)\right)=\inf _{u_i(\cdot) \in\, \mathcal{U}_i^{d}} J_i\left(u_i(\cdot)\right).
	\end{equation*}
\end{myprob}

We employ a variation method similar to Theorem \ref{theorem open loop centralized} to provide the open-loop strategy for Problem 3.1. Therefore, we omit the proof.

\begin{mythm}\label{theorem open loop fixed point}
	Let Assumptions \ref{A1} hold, then Problem \ref{problem limiting} has an optimal strategy $ \check{u}^*_i(\cdot)$ if and only if for $J_i\left(u_i(\cdot)\right)$ is convex in $u_i$ and the following FBSDEs
	\begin{equation}\label{Hamiltonian system fixed point}
		\left\{\begin{aligned}
			d \check{x}^*_i=&\left(A \check{x}^*_i+B  \check{u}^*_i+E z+f\right) d t+  \left(C \check{x}^*_i+D  \check{u}^*_i+F z+g\right) d W_i \\
			&+\left(\widetilde{C}\check{x}^*_i+\widetilde{D}  \check{u}^*_i
			+\widetilde{F}z+\widetilde{g}\right) d W_0, \\
			d m_i =
			&-\bigg( A^\top m_i + C^\top n_i + \widetilde{C}^\top n_{i0}+\left(I_n-\frac{\Gamma_1}{N}\right)^\top Q \big( \check{x}_i^* - \Gamma_1 z- \eta_1 \big) \bigg) dt \\
			&   +  n_i\, dW_i + n_{i0} \, dW_0,\\
			\check{x}_i^*(0)&=\xi_i,\\
			m_i(T)&= G\left(\check{x}^*_i(T)-\Gamma_0 z(T)-\eta_0\right),
		\end{aligned}\right.
	\end{equation}
	admit adapted solution $(\check{x}_i^*(\cdot), m_i(\cdot), n_i(\cdot), n_{i0}(\cdot))\in L^2_{\mathcal{F}^i_t}(0,T;\mathbb{R}^n)\times L_{\mathcal{F}^i_t}^2\left(0, T ; \mathbb{R}^n\right) \times L_{\mathcal{F}^i_t}^2\left(0, T ; \mathbb{R}^{n}\right)\times L_{\mathcal{F}^i_t}^2\left(0, T ; \mathbb{R}^{n}\right)$, satisfying the stationary condition
	\begin{equation}\label{optimal open loop condition of fixed point}
		B^\top m_i+D^\top n_i+\widetilde{D}^\top n_{i0}+R \check{u}^*_i-R\eta_2=0.
	\end{equation}
\end{mythm}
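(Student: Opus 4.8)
The plan is to reproduce, in the simpler single-agent setting, the variational argument used to prove Theorem \ref{theorem open loop centralized}: Problem \ref{problem limiting} is a genuine linear-quadratic stochastic control problem in which the $\mathcal{F}_t^0$-adapted process $z(\cdot)$ is exogenous and control-independent, so the interaction (mean-field) terms of the $N$-player analysis simply disappear. Fix $i$. For the ``only if'' direction, suppose $\check u_i^*(\cdot)\in\mathcal{U}_i^d$ is optimal with state $\check x_i^*(\cdot)$; for arbitrary $u_i(\cdot)\in\mathcal{U}_i^d$ and $\varepsilon>0$ put $u_i^\varepsilon:=\check u_i^*+\varepsilon v_i$ with $v_i:=u_i-\check u_i^*$. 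Since $z(\cdot)$ does not respond to the control, the variation $\delta x_i:=(\check x_i^\varepsilon-\check x_i^*)/\varepsilon$ solves the homogeneous linear SDE
\begin{equation*}
	d\delta x_i=\left(A\delta x_i+Bv_i\right)dt+\left(C\delta x_i+Dv_i\right)dW_i+\left(\widetilde C\delta x_i+\widetilde Dv_i\right)dW_0,\qquad \delta x_i(0)=0,
\end{equation*}
and the quadratic form of (\ref{cost functional limiting}) yields the exact identity
\begin{equation*}
	J_i(u_i^\varepsilon)-J_i(\check u_i^*)=\frac{\varepsilon^2}{2}\,\widetilde J_i(v_i)+\varepsilon X_2,
\end{equation*}
where $\widetilde J_i(v_i):=\mathbb{E}\big[\int_0^T(\|\delta x_i\|_Q^2+\|v_i\|_R^2)\,dt+\|\delta x_i(T)\|_G^2\big]$ and $X_2$ collects the first-order terms $\langle Q\delta x_i,\check x_i^*-\Gamma_1 z-\eta_1\rangle$, $\langle Rv_i,\check u_i^*\rangle$ and the terminal pairing of $\delta x_i(T)$ with $G(\check x_i^*(T)-\Gamma_0 z(T)-\eta_0)$.

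The second step is to introduce the adjoint triple $(m_i,n_i,n_{i0})$ as the solution of the backward equation in (\ref{Hamiltonian system fixed point}). Its generator is affine in $(m_i,n_i,n_{i0})$ with bounded coefficients, and its free term and terminal datum are linear in $\check x_i^*$ and $z$, hence square-integrable and $\mathcal{F}_t^i$-adapted under Assumption \ref{A1}; so it is a standard linear BSDE with a unique adapted solution in $L^2_{\mathcal{F}_t^i}$. Applying It\^o's formula to $\langle\delta x_i(\cdot),m_i(\cdot)\rangle$ and taking expectations cancels exactly the $Q$- and $G$-terms in $X_2$ and reduces it to
\begin{equation*}
	X_2=\mathbb{E}\int_0^T\big\langle B^\top m_i+D^\top n_i+\widetilde D^\top n_{i0}+R\check u_i^*,\ v_i\big\rangle\,dt .
\end{equation*}

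For the ``only if'' direction I then let $\varepsilon\downarrow 0$: since both $v_i$ and $-v_i$ are admissible and $X_2$ is linear in $v_i$ while $\widetilde J_i$ is quadratic, optimality forces $X_2=0$ for every $v_i$, which by the displayed formula for $X_2$ is precisely the stationary condition (\ref{optimal open loop condition of fixed point}); the leading term then forces $\widetilde J_i(v_i)\ge 0$ for every $v_i$, i.e.\ $J_i$ is convex in $u_i$. Coupling the forward state equation (\ref{state limiting}) driven by $\check u_i^*$ with the adjoint BSDE gives the desired adapted solution of (\ref{Hamiltonian system fixed point}). For the ``if'' direction, given that $J_i$ is convex and that (\ref{Hamiltonian system fixed point}) admits an adapted solution satisfying (\ref{optimal open loop condition of fixed point}), I define $\check u_i^*$ through that stationary condition; then for any competitor $u_i\in\mathcal{U}_i^d$ the same It\^o-duality identity (now with $\varepsilon=1$) gives $X_2=0$, so $J_i(u_i)-J_i(\check u_i^*)=\frac12\widetilde J_i(u_i-\check u_i^*)\ge 0$ by convexity, and $\check u_i^*$ is optimal.

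The argument is essentially routine (which is why the text omits it), so there is no serious obstacle; the only points needing a little care are (i) checking that the free term of the adjoint BSDE depends on the \emph{optimal} state $\check x_i^*$ rather than on the variation $\delta x_i$, so that the forward and backward parts genuinely decouple and the BSDE is solvable by classical theory, and (ii) bookkeeping the It\^o cross-terms so that the reduction of $X_2$ is exact and the $Q$- and $G$-contributions cancel without residue.
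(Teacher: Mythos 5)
Your proposal is correct and is precisely the argument the paper intends: the text omits the proof of this theorem, stating only that it follows by ``a variation method similar to Theorem \ref{theorem open loop centralized}'', and your single-agent specialization (exogenous $z$, homogeneous variation equation, It\^o duality with the adjoint BSDE, then linearity of $X_2$ in $v_i$ forcing the stationary condition and nonnegativity of the quadratic term forcing convexity) is exactly that variational argument with the mean-field coupling removed. The only caveat is that the exact cancellation in your duality identity requires the adjoint free term to be $Q\left(\check{x}_i^*-\Gamma_1 z-\eta_1\right)$, whereas the BSDE as printed in (\ref{Hamiltonian system fixed point}) carries a spurious factor $\left(I_n-\frac{\Gamma_1}{N}\right)^\top$ inherited from the $N$-player system (note $N$ should not appear in the limiting problem at all, and the terminal condition correctly has just $G$); your version is the one consistent with the limiting cost functional (\ref{cost functional limiting}).
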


Since the $\mathcal{F}_t^{i}$-adapted process $\check{x}_i^*(\cdot)$ and $\mathcal{F}_t^{j}$-adapted process $\check{x}_j^*(\cdot)$ (for $i\neq j$ and $i,j=1,2,\cdots$) are identically distributed and conditionally independent given $\mathbb{E}\left[\cdot \mid \mathcal{F}^{W_0}\right]$, we can apply the conditional strong law of large numbers (Majerek et al. \cite{Majerek-Nowak-Zieba-2005}) to draw a conclusion
\begin{equation}\label{SLLN}
	z(\cdot)=\lim _{N \rightarrow \infty} \frac{1}{N} \sum_{i=1}^N \check{x}_i^*(\cdot)=\mathbb{E}\left[\check{x}_i^*(\cdot) \mid \mathcal{F}_\cdot^{W_0}\right].
\end{equation}
Substituting $z(\cdot)$ with $\mathbb{E}\left(\check{x}^*_i(\cdot)\mid\mathcal{F}^{W_0}_\cdot\right)$, then we derive the following CC system, which is a {\it conditional mean field FBSDE} (CMF-FBSDE) (\cite{Shi-Wang-Xiong-2016}) of $(\check{x}_i^*(\cdot), m_i(\cdot), n_i(\cdot), n_{i0}(\cdot))\in L^2_{\mathcal{F}^i_t}(0,T;\mathbb{R}^n)\times L_{\mathcal{F}^i_t}^2\left(0, T ; \mathbb{R}^n\right) \times L_{\mathcal{F}^i_t}^2\left(0, T ; \mathbb{R}^{n}\right)\times L_{\mathcal{F}^i_t}^2\left(0, T ; \mathbb{R}^{n}\right)$ :
\begin{equation}\label{Hamiltonian system fixed point CC}
	\left\{\begin{aligned}
		d \check{x}^*_i=&\left(A \check{x}^*_i+B  \check{u}^*_i+E \mathbb{E}\left[\check{x}_i^* \mid \mathcal{F}_t^{W_0}\right]+f\right) d t+  \left(C \check{x}^*_i+D  \check{u}^*_i+F \mathbb{E}\left[\check{x}_i^* \mid \mathcal{F}_t^{W_0}\right]+g\right) d W_i \\
		&+\left(\widetilde{C}\check{x}^*_i+\widetilde{D}  \check{u}^*_i
		+\widetilde{F}\mathbb{E}\left[\check{x}_i^* \mid \mathcal{F}_t^{W_0}\right]+\widetilde{g}\right) d W_0, \\
		d m_i =
		&-\bigg(A^\top m_i+C^\top n_i+\widetilde{C}^\top n_{i0}+\left(I_n-\frac{\Gamma_1}{N}\right)^\top Q \big( \check{x}_i^* - \Gamma_1 \mathbb{E}\left[\check{x}_i^* \mid \mathcal{F}_t^{W_0}\right]- \eta_1 \big) \bigg) dt \\
		& + n_i\, dW_i + n_{i0} \, dW_0,\\
		\check{x}_i^*(0)&=\xi_i,\\
		m_i(T)&= G\left(\check{x}^*_i(T)-\Gamma_0 \mathbb{E}\left[\check{x}_i^* \mid \mathcal{F}_t^{W_0}\right](T)-\eta_0\right).\\
	\end{aligned}\right.
\end{equation}
Next, we employ a decoupling technique to derive the state feedback form of the optimal strategy of Problem \ref{problem limiting}.
For each $i=1, \cdots, N$, we suppose
\begin{equation}\label{decouple form fixed point}
	m_i(\cdot)=\check{P}(\cdot) \check{x}_i^*(\cdot)+\check{K}(\cdot) \mathbb{E}\left[\check{x}_i^*(\cdot) \mid \mathcal{F}_\cdot^{W_0}\right](\cdot)+\check{\varphi}(\cdot),
\end{equation}
where $\check{P}(\cdot),\check{K}(\cdot)$ are deterministic differentiable functions satisfying $\check{P}(T)= G$, $\check{K}(T)= -G\Gamma_0$ and $(\check{\varphi}(\cdot),\check{\beta}(\cdot))$ is an $\mathcal{F}^0_t$-adapted processes pair satisfying
\begin{equation}
	d \check{\varphi} =\check{\alpha} d t+\check{\beta} d W_0, \quad \check{\varphi}(T)=- G \eta_0.
\end{equation}
By applying It\^o's formula to (\ref{decouple form fixed point}) and comparing the diffusion coefficients, we get
\begin{equation}\label{diffusion coefficient of n_i}
	\left\{\begin{aligned}
		&\check{P}\left(C \check{x}^*_i+D  \check{u}^*_i+F \mathbb{E}\left[\check{x}_i^* \mid \mathcal{F}_t^{W_0}\right]+g\right)=n_i ,\\
		&\check{P}\left(\widetilde{C} x^*_i+\widetilde{D}  \check{u}^*_i+\widetilde{F} \mathbb{E}\left[\check{x}_i^* \mid \mathcal{F}_t^{W_0}\right]+\widetilde{g}\right)+\check{K}\left(\left(\widetilde{C}+\widetilde{F}\right) \mathbb{E}\left[\check{x}_i^* \mid \mathcal{F}_t^{W_0}\right]\right.\\
		&\left.+\widetilde{D} \mathbb{E}\left[u_i^* \mid \mathcal{F}_t^{W_0}\right]+\widetilde{g}\right)+\check{\beta} =n_{i0}.
	\end{aligned}\right.
\end{equation}
To obtain the feedback representation of the centralized open-loop Nash equilibrium, we need the following assumption.
\begin{myassump}\label{A6}
	$$
	\begin{aligned}
		& R+D^\top\check{P} D+\widetilde{D}^\top\check{P} \widetilde{D}>0,\\
		& R+D^\top\check{P} D+\widetilde{D}^\top\left(\check{P}+\check{K}\right)\widetilde{D}>0.
	\end{aligned}
	$$
\end{myassump}
Substituting (\ref{diffusion coefficient of n_i}) into (\ref{optimal open loop condition of fixed point}), we first get
$$
\mathbb{E}\left[\check{u}_i^* \mid \mathcal{F}_t^{W_0}\right]=-\left(\check{\mathcal{R}}+\widetilde{D}^\top \check{K} \widetilde{D}\right)^{-1}\left[\left(\check{\widetilde{P}}+\check{\widetilde{K}}\right) \mathbb{E}\left[\check{x}_i^* \mid \mathcal{F}_t^{W_0}\right]+ \check{\Phi}\right],
$$
where
$$
\begin{aligned}
	& \check{\mathcal{R}}:=R+D^\top\check{P} D+\widetilde{D}^\top \check{P} \widetilde{D}, \\
	& \check{\widetilde{P}}:=B^\top \check{P}+D^\top\check{P} C+\widetilde{D}^\top \check{P} \widetilde{C} ,\\
	& \check{\widetilde{K}}:=B^\top \check{K}+D^\top\check{P} F+\widetilde{D}^\top \check{P} \widetilde{F}+\widetilde{D}^\top \check{K}\left(\widetilde{C}+\widetilde{F}\right),\\
	& \check{\Phi}:=B^\top \check{\varphi}+D^\top\check{P} g+\widetilde{D}^\top\left(\check{P}+\check{K}\right) \widetilde{g}+\widetilde{D}^\top \check{\beta}-R\eta_2 ,
\end{aligned}
$$
and then the optimal strategy of Problem \ref{problem limiting} has the feedback representation
\begin{equation}\label{optimal feedback of open loop condition of fixed point}
	\check{u}_i^*=-\check{\mathcal{R}}^{-1} \check{\widetilde{P}} \check{x}_i^*+\left[\check{\mathcal{R}}^{-1} \check{\widetilde{P}}-\left(\check{\mathcal{R}}+\widetilde{D}^\top \check{K} \widetilde{D}\right)^{-1}\left(\check{\widetilde{P}}+\check{\widetilde{K}}\right)\right] \mathbb{E}\left[\check{x}_i^* \mid \mathcal{F}_t^{W_0}\right]-\left(\check{\mathcal{R}}+\widetilde{D} \check{K} \widetilde{D}\right)^{-1} \check{\Phi}.
\end{equation}
Then, by applying It\^o's formula to (\ref{decouple form fixed point}) and comparing the drift coefficients, we obtain the equation satisfied by the coefficients of the term $\check{x}^*_i(\cdot)$:
\begin{equation}\label{check P equation}
	\left\{\begin{aligned}
		&\dot{\check{P}}+\check{P} A+A^\top \check{P}+C^\top \check{P} C+\widetilde{C}^\top \check{P} \widetilde{C}-\check{\widetilde{P}}^\top \check{\mathcal{R}}^{-1} \check{\widetilde{P}}+Q=0 , \\
		&\check{P}(T)=G ,
	\end{aligned}\right.
\end{equation}
the equation satisfied by the coefficients of the term $\mathbb{E}\left[\check{x}_i^* \mid \mathcal{F}_t^{W_0}\right]$:
\begin{equation}\label{check K equation}
	\left\{\begin{aligned}
		&\dot{\check{K}}+\check{K}(A+E)+\check{P} E+A^\top \check{K}+C^\top \check{P} F+\widetilde{C}^\top\left(\check{P}+\check{K}\right) \widetilde{F} \\
		&+\check{\widetilde{P}}^\top \check{\mathcal{R}}^{-1} \check{\widetilde{P}}-\left(\check{\widetilde{P}}^\top+\check{K} B+\widetilde{C}^\top \check{K} \widetilde{D}\right)\left(\check{\mathcal{R}}+\widetilde{D}^\top \check{K} \widetilde{D}\right)^{-1}\left(\check{\widetilde{P}}+\check{\widetilde{K}}\right)-Q \Gamma_1=0,\\
		&\check{K}(T)=-G \Gamma_0,
	\end{aligned}\right.
\end{equation}
and the equation satisfied by the non-homogeneous term:
\begin{equation}\label{check varphi equation}
	\left\{\begin{aligned}
		d \check{\varphi}= & -\bigg\{A^\top \check{\varphi}+\left(\check{P}+\check{K}\right) f-\left[\left(\check{P}+\check{K}\right) B-C^\top \check{P} D-\widetilde{C}\left(\check{P}+\check{K}\right) \widetilde{D}\right] \\
		&\quad \left.\times\left(\check{\mathcal{R}}+\widetilde{D}^\top \check{K} \widetilde{D}\right)^{-1}\check{\Phi}+C^\top \check{P} g+\widetilde{C}^\top\left(\check{P}+\check{K}\right) \widetilde{g}+\widetilde{C}\check{\beta} -Q\eta_1\right\} d t+\check{\beta} d W_0 ,\\
		\check{\varphi}(T)= & -G \eta_0.
	\end{aligned}\right.
\end{equation}
Using a method similar to that in Li et al. \cite{Li-Nie-Wu-2023}, we can obtain the asymptotic optimality of strategy (\ref{optimal feedback of open loop condition of decentralized}). We omit the proof.
\begin{myremark}
	We find that equations (\ref{P equation})-(\ref{varphi equation}) and equation (\ref{check P equation})-(\ref{check varphi equation}) have the same form. Using the results of Theorem 3.3, we have $P(\cdot)=\check{P}(\cdot)$, $K(\cdot)=\check{K}(\cdot)$, $\varphi(\cdot)=\check{\varphi}(\cdot)$. Therefore, the decentralized strategy (\ref{optimal feedback of open loop condition of decentralized}) obtained using the direct method is identical to the decentralized strategy (\ref{optimal feedback of open loop condition of fixed point}) obtained using the fixed-point method.
\end{myremark}

\section{An application to production planning problem}

To illustrate the applications of studying stochastic LQ large-population problems with common noise, motivated by the example on page 55 of Yong and Zhou \cite{Yong-Zhou-1999}, we present the following production planning problem.

We consider that there are $N$ homogeneous manufacturers in the market. Each manufacturer produces a kind of product $\mathcal{B}_i$ over the interval $[0,T]$, so as to hold exactly $k$ units of inventory at the terminal time $T$. Then, every investor's objective is to choose a production planning strategy that minimizes the cost. We denote the inventory process of the $i$-th manufacturer by $I_i(\cdot)$. Then we assume that the loss $L_i(\cdot)$ of inventory of the $i$-th manufacturer can be approximated by the following stochastic differential equation:
\begin{equation*}
	dL_i(t)=-r(t) I_i(t) +\left(\tilde{a}(t) I_i(t)+g(t)\right) dW_0(t).
\end{equation*}
Here $r(\cdot) > 0$ is a deterministic function denoting the instantaneous loss rate of inventory (e.g., losses caused by environmental factors) of $i$-th manufacturer and the volatility $\left(\tilde{a}(\cdot) I_i(\cdot)+g(\cdot)\right)$ of losses of $i$-th manufacturer depends on the his inventory level. The common Brownian motion $W_0(\cdot)$ represents that all manufacturers' inventories are affected by the same external environment. We further assume that every manufacturer faces a demand of $d(\cdot)$ units. Therefore, we can assume the inventory process $I_i(\cdot)$ of $i$-th manufacturer satisfies the following stochastic differential equation: 
	\begin{equation}\label{inventory}
		\left\{\begin{aligned}
			dI_i(t) =& \left(-r(t)I_i(t)+c(t)u_i(t)+m(t)(I^{(N)}(t)-I_i(t))-d(t)\right)dt\\
			&+\left(\tilde{b}(t)u_i(t)+\tilde{g}(t)\right)dW_i(t)+\left(\tilde{a}(t)I_i(t)+g(t)\right)dW_0(t),\\
			I_i(0)=&I_0
		\end{aligned}\right.
	\end{equation}
	where $u_i(\cdot)$ is control process of $i$-th manufacturer which can regulate the instantaneous productivity, $c(\cdot)$ is the productivity adjustment coefficient, and $\left(\tilde{b}(\cdot) u_i(\cdot)+\tilde{g}\right)$ denotes the volatility of instantaneous productivity of $i$-th manufacturer. Here, $W_i$ is the private noise of the i-th manufacturer representing internal influence within the manufacturer. The $N$ manufacturers maintain mutual contact. They can mutually supply or borrow inventory from one another so that each individual inventory level tracks the common average inventory. In other word, if $I_i(\cdot)$ is less than $I^{(N)}(\cdot)$, i-th manufacturer will borrow an amount $(I^{(N)}(\cdot)-I_i(\cdot))$ of inventory from other manufacturers. Otherwise, i-th manufacturer will supply inventory instead.
	
	Then,  we consider the cost functional for the i-th manufacturer to be
	\begin{equation}
		\mathcal{J}_i(u_i(\cdot);u_{-i}(\cdot))=\mathbb{E}\left\{\int_{0}^{T}\left[ Q(t) \left(I_i(t)-I^{(N)}(t)\right)^2+R(t)\left( u_i(t)-\eta_2(t)\right)^2 dt\right]+G\left(I_i(T)-k\right)^2\right\}
	\end{equation}
	The running cost comprises two parts: the first term penalizes the deviation between the i-th manufacturer's inventory and the average inventory; the second term represents the deviation of the control from the reference productivity benchmark $\eta_2(\cdot)$. The terminal cost penalizes the deviation between the i-th manufacturer's inventory and the target inventory $k$ at the terminal time $T$.  Owing to real-world constraints and the curse of dimensionality, centralized strategies that require complete information are seldom attainable. We therefore draw on the model of Section 2 to formulate the following problem.
	\begin{myprob}
		To find a optimal production planning decentralized strategy
		set $u^*(\cdot)=\\ \left(u_1^*(\cdot),u_2^*(\cdot),\cdots,u_N^*(\cdot)\right)$, $u_i^*(\cdot)\in\mathcal{U}^d_i, i=1,\cdots, N$ such that 
		\begin{equation*}
			\mathcal{J}_i\left(u^*_i(\cdot), u^*_{-i}(\cdot\right)\leq\inf _{u_i(\cdot) \in\, \mathcal{U}_i^{c}} \mathcal{J}_i\left(u_i(\cdot), u^*_{-i}(\cdot)\right)+\varepsilon,
		\end{equation*}
		where $\varepsilon=O(\frac{1}{\sqrt{N}})$.
	\end{myprob}
	Obviously, the production planning model established above is a special case of the model introduced in Section 2. Therefore,  we apply the main results from Section 3 to solve this production planning problem. For the simplicity of the calculations in this example, we set the number of manufacturers $N=300$ and time horizon $[0,1]$. We additionally assume the following system parameters: $r=0.1$, $c=0.5$, $m=0.3$, $d=2$, $\tilde{b}=0.1$, $g=0.5$,  $\tilde{a}=0.1$, $\tilde{g}=0.5$, $I_i$, $i=1,\cdots,N$ satisfies uniformly distributed on the interval $[2.5,3.5]$, $Q=1$, $R=10$, $\eta_2=6$, $G=1$, $k=2.5$. To provide more intuitive insight, we use the specific coefficients above to generate several plots that corroborate the validity of our practical findings. By the  Euler's method, we plot the solution curves of Riccati equations \ref{P equation}, \ref{K equation}, \ref{Pi equation} and \ref{M equation}
	in Figure \ref{fig:Riccati}. And then, the optimal production planning decentralized strategy of the N manufacturers are given by Figure \ref{fig:bar{u}^*_i}. After implementing the optimal decentralized strategy, the optimal states of N manufacturers are shown in Figure \ref{fig:bar{x}^*_i}. Figure \ref{fig:hat{x}^{*(N)}} illustrates the convergence of the limiting process $\bar{x}^*$ to the state-average term $\hat{x}^{*(N)}$. 
	
	\begin{figure}
		\begin{center}
			\includegraphics[width=0.6\textwidth]{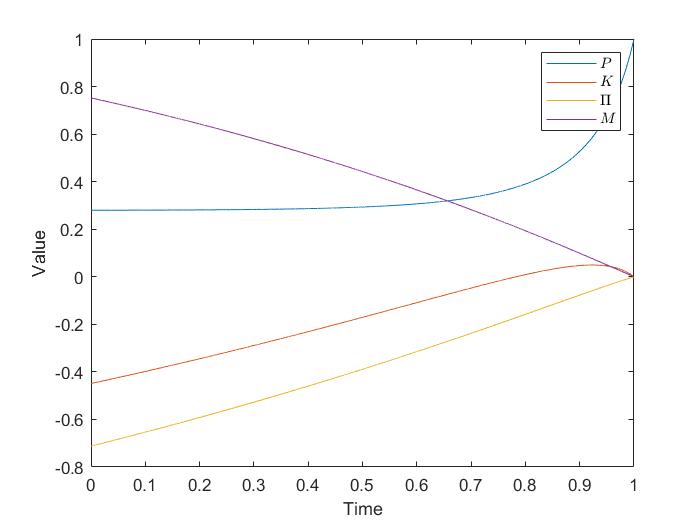}
			\caption{The solution curve of $P,K,\Pi,M$}
			\label{fig:Riccati}
		\end{center}
	\end{figure}
	\begin{figure}
		\begin{center}
			\includegraphics[width=0.6\textwidth]{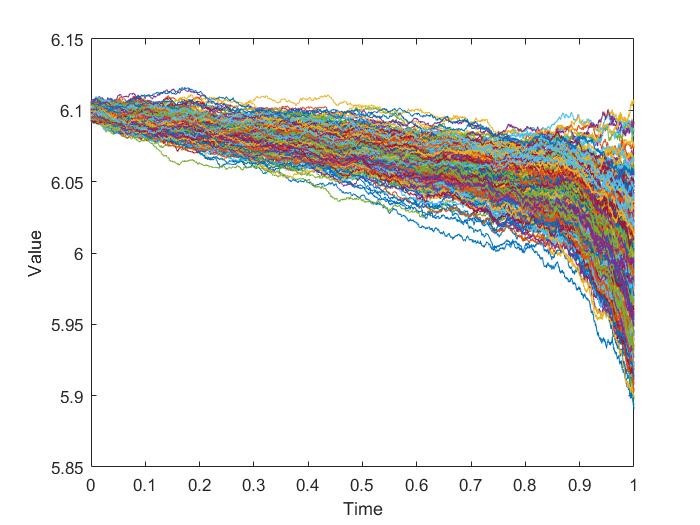}
			\caption{The solution curve of $\bar{u}^*_i$}
			\label{fig:bar{u}^*_i}
		\end{center}
	\end{figure}
	\begin{figure}
		\begin{center}
			\includegraphics[width=0.6\textwidth]{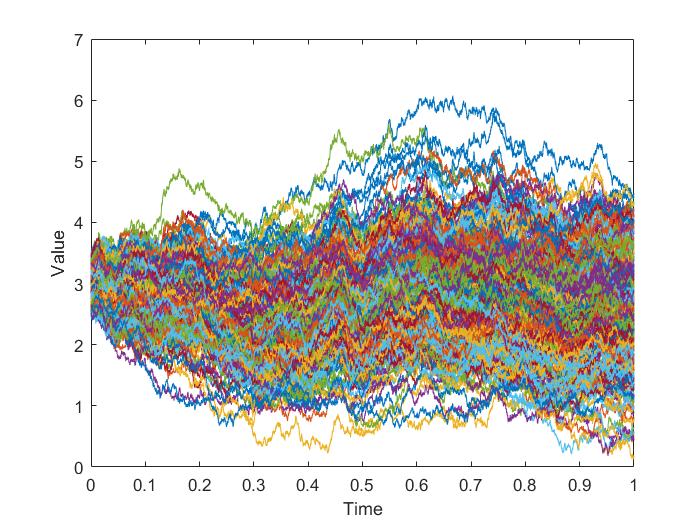}
			\caption{The solution curve of $\bar{x}^*_i$}
			\label{fig:bar{x}^*_i}
		\end{center}
	\end{figure}
	\begin{figure}
		\begin{center}
			\includegraphics[width=0.6\textwidth]{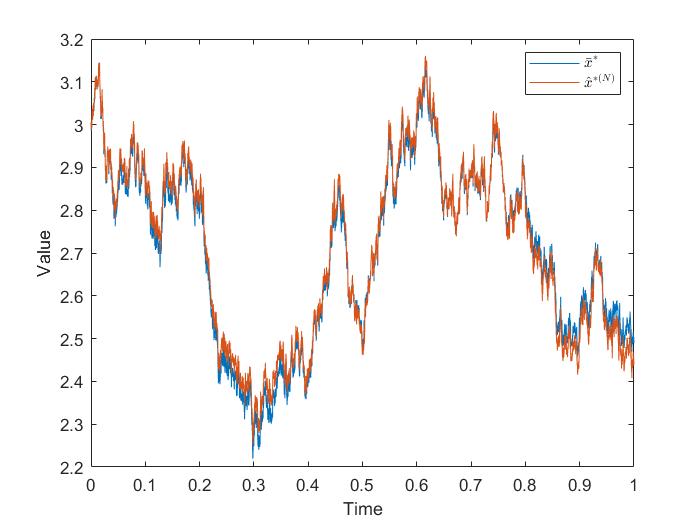}
			\caption{The solution curve of $\hat{x}^{*(N)}$ and $\bar{x}^{*}_i$}
			\label{fig:hat{x}^{*(N)}}
		\end{center}
	\end{figure}
	
\section{Conclusion}

Motivated some product planning problems in the market, in this paper we have applied a direct method to consider a class of general LQ MFGs with common noise. By comparing with the fixed-point approach, we find that the decentralized open-loop asymptotic Nash equilibrium strategies obtained are identical to those derived using the direct method. Our present work suggests various future research directions. For example, (i) to study the mean-variance analysis with relative performance in our setting; (ii) to study the MFGs with constraint, we can attempt to adopt the method of Lagrange multipliers and the Ekeland variational principle; (iii) to consider the direct method to solve MFGs with the state equation contains control average terms. We plan to study these issues in our future works.

\end{document}